\renewcommand\labelenumi{(\roman{enumi})}
\renewcommand\theenumi\labelenumi
\numberwithin{equation}{section}
\begin{document}

\title[Galerkin NN-POD for Acoustic and Electromagnetic
Wave Propagation]{
Galerkin Neural Network-POD for Acoustic and Electromagnetic
Wave Propagation in Parametric Domains
}

\author{Philipp Weder$^\dagger$}
\author{Mariella Kast $^\dagger$}
\author{Fernando Henr\'iquez $^\dagger$}
\author{Jan S. Hesthaven $^\dagger$}
\address{$^\dagger$Chair of Computational Mathematics and Simulation Science (MCSS), \'Ecole Polytechnique F\'ed\'erale de Lausanne, Lausanne, Switzerland.}
\email{philipp.weder@epfl.ch}
\email{mariella.kast@epfl.ch}
\email{fernando.henriquez@epfl.ch}
\email{jan.hesthaven@epfl.ch}

\maketitle

\begin{abstract}
We investigate reduced-order models for acoustic and electromagnetic wave problems in parametrically defined domains. The parameter-to-solution maps are approximated
following the so-called Galerkin POD-NN method, which combines the construction of a reduced basis via proper orthogonal decomposition (POD) with neural networks (NNs). 
As opposed to the standard reduced basis method, this approach 
allows for the swift and efficient evaluation of reduced-order solutions
for any given parametric input.

As is customary in the analysis of problems in random or parametrically defined domains,
we start by transporting the formulation to a reference domain.
This yields a parameter-dependent variational problem set on parameter-independent functional spaces. In particular, we consider affine-parametric domain transformations 
characterized by a high-dimensional, possibly countably infinite, parametric input.  
To keep the number of evaluations of the high-fidelity solutions manageable, we propose using low-discrepancy sequences to sample the parameter space efficiently. 
Then, we train an NN
to learn the coefficients in the reduced representation. This approach completely decouples the offline and online stages of the reduced basis paradigm.

Numerical results for the three-dimensional Helmholtz and Maxwell equations confirm the method's accuracy up to a certain barrier and show significant gains in online speed-up compared to the traditional Galerkin POD method.
\end{abstract}

\section{Introduction}\label{sec:introduction}
Partial differential equations (PDEs) are a ubiquitous approach to physical modeling in engineering and the applied sciences. However, knowledge of the underlying parameters, such as boundary conditions, source terms, or geometry, is often incomplete. Consequentially,  parameters are often varied during the search for a specific configuration of the system, e.g., in parameter estimation \cite{van2007parameter}, topology optimization \cite{borrvall2003topology}, optimal control \cite{troltzsch2010optimal}, or uncertainty quantification \cite{smith2013uncertainty}.
As these methods require repeated evaluations of the parameterized PDE problem, one speaks of \emph{many-query problems}. %

The standard discretization techniques, such as the finite element, finite difference, or finite volume methods, allow precise approximations of the solutions to parameterized PDEs (pPDEs). Yet, such high-fidelity (HF) approximations are intrinsically linked to a prohibitive computational complexity for many-query or real-time applications. \emph{Reduced-order modeling} (ROM) aims at constructing fast surrogate models to accelerate the computation of the approximate solution to a given pPDE problem while retaining an accuracy comparable to that of HF techniques. One common approach to ROM are \emph{reduced basis} (RB) methods. They are based on an \emph{offline-online paradigm}, where first, a reduced basis is constructed from a number of expensive high-fidelity snapshots (offline step). Then, a fast surrogate model is evaluated in the reduced basis (online step).
For a comprehensive review of the RB method for stationary problems with certified error control, we refer to \cite{hesthaven2016certified,Quarteroni_2016,prud2002reliable,rozza2014fundamentals}.

In this work, we focus on the construction of surrogate models from a reduced basis obtained by
the proper orthogonal decomposition (POD). For the Galerkin POD method, the full-order system is projected onto the reduced space, which leads to a reduced system of equations for the RB coefficients. Although effective, this approach does not yield the desired computational speed-up in the general case, as the full-order solution still needs to be assembled in the online stage to evaluate the PDE operators.
Various intrusive techniques aim at alleviating this issue, for example, by exploiting affine
parameter-dependencies \cite[Chapter 3.3]{hesthaven2016certified} or using hyper-reduction   \cite[Chapter 5]{hesthaven2016certified} to approximate reduced order terms directly.

In contrast, non-intrusive RB methods only rely on HF snapshots and typically construct a surrogate of the map from parameters to RB coefficients. To obtain the response surface, a data-driven regression problem is formulated on the set of RB coefficients of HF snapshots at evaluated parameter location. Then, a query of the surrogate model allows for the solution evaluation of a new, unseen parameter setting. The response surface has to be able to capture the non-linear structure of the underlying parameter-to-solution map of the PDE problem, which makes neural networks \cite{Hesthaven2018} or Gaussian processes \cite{guo2018reduced} suitable choices. %
In this work, we explore the construction of projection-based reduced-order models
for acoustic and electromagnetic scattering, which are modeled by the
Helmholtz and time-harmonic Maxwell equations, respectively. For both cases, we consider computational domains with parametrically defined geometries.
Such models are of interest in a variety of applications, e.g.~metallic meta-materials \cite{Bhattarai_2017},
and the design of thermovoltaic cells \cite{Chen_2007}.

In particular, we are interested in studying affine-parametric 
domain deformations with a high-dimensional, possibly countable infinite, parametric input, which leads to a computationally challenging, high-dimensional parameter-to-solution map.
Numerous techniques exist which are tailored to treat efficiently
problems with high-dimensional inputs, such as sparse grid interpolation and quadrature \cite{ZS17,NTW2008,HHPS2018},
higher-order Quasi-Monte Carlo integration (HoQMC)\cite{DKL14,DLC16}, construction of NN
surrogates \cite{SZ19,HSZ20,OSZ22,HOS22,ABD22}, and model order reduction \cite{Chen_2015}.
As pointed out in \cite{CCS15}, a key property to break the so-called \emph{curse of dimensionality}
in the parameter space is the holomorphic dependence of the parameter-to-solution map.
This property has been established for a variety of problems, including for example 
subsurface flows \cite{CNT2016,HPS2016,HS2022}, time-harmonic electromagnetic wave
scattering \cite{JSZ16,Aylwin2020}, stationary Stokes
and Navier-Stokes equation \cite{CSZ18}, Helmholtz equation
\cite{HSSS15,SW23,GKS21}, and for boundary 
integral operators \cite{HS21,henriquez2021shape,PHJ23,dolz2023parametric,DLM22,henriquez2024reduced}.

\subsection*{Contributions}
We theoretically and computationally study the performance of the POD-NN
applied to  the Helmholtz interior impedance problem and the Maxwell
lossy cavity with affine-parametric shape deformations previously used in \cite{Aylwin2020}.
The parametric holomorphy property, which is crucial for the effectiveness of dimensionality reduction,  has previously been established for the Maxwell lossy cavity problem.
For the sake of completeness, we provide the corresponding proof for the Helmholtz impedance problem. 

To approximate the complex-valued reduced order coefficients,   we modify the  POD-NN approach in \cite{Hesthaven2018} and propose
an NN architecture, which models the real and imaginary parts of each reduced coefficient as separate outputs. We show that this does not affect the holomorphy property, indicating that this modified map can also be emulated by NNs.  To further facilitate the learning task, we propose a centered POD approach. To justify the effectiveness of the POD-NN approach, we further provide a complete convergence analysis for the reduced basis method for the two model problems.

Finally, we present a comprehensive set of numerical experiments in which
we study the effect of parametric domain transformations with different decay structures, wave numbers of the problems, and hyperparameters of the neural network.

\subsection*{Outline}
 In Section \ref{sec:rom_for_parametric_pdes} we 
introduce the notion of pPDEs, together with a thorough description of the Helmholtz impedance
and Maxwell lossy cavity problem in parametric domains.
Section \ref{sec:projection_based_ROM} introduces the
projection-based reduced order modeling for pPDEs and some theoretical properties. 
Next, in Section \ref{sec: neural network POD}, we discuss the Galerkin POD-NN approach and its adaptation to
complex-valued solution spaces in more detail. 
In Section \ref{sec: Numerical results}, we demonstrate the efficacy of our proposed approach and provide insights into the relation between problem complexity and approximation error by varying various hyperparameters.
Lastly, in Section \ref{sec:concluding_remarks}, we draw conclusions about this method and sketch directions of possible future research.

\section{Parametric PDEs: Helmholtz and Maxwell Formulations}
\label{sec:rom_for_parametric_pdes}
In this section, we first state the formulation of pPDEs in a generic fashion, which serves
as a common ground to explain  the reduced
basis method applied to both PDE model problems.
Then, we discuss the weak formulations, well-posedness, and discretizations
in more detail for the 
Helmholtz impedance and Maxwell lossy cavity problems. 

Following previous works on the subject, we introduce the 
Galerkin POD for pPDEs, where we put particular emphasis on parametrically defined domains. 

\subsection{Parametric PDEs}
\label{eq:rom_and_ppdes}
Throughout, let $\text{U} \coloneqq [-1,1]^\IN$ be the parameter space.
Let $V$ be a \emph{complex} Hilbert space 
endowed with the scalar product $(\cdot, \cdot)_V$ and
the induced norm $\norm{\cdot}_V = \sqrt{(\cdot, \cdot)_V}$. 
We denote by $V'$ the anti-dual space of $V$, i.e.
the set of all anti-linear functionals acting on $V$.
Then, the differential (strong) form of a pPDE can be expressed as follows: 
For each $\y\in \text{U}$ we seek $u(\y) \in V$ such that
\begin{equation}\label{eq:generic PPDE problem}
	\mathsf{G}
	\left(
		u(\y);
		\y
	\right) = 0,
	\quad \text{in } V',
\end{equation}
where, for each $\y \in \text{U}$, $\mathsf{G}(\cdot;\y): V \to V'$ is an operator
representing the action of the underlying PDE. 
We express \eqref{eq:generic PPDE problem} in a variational form.
To this end, for each $\y \in \normalfont\text{U}$, we define the parameter 
dependent sesquilinear form $\mathsf{g}(\cdot,\cdot;\y): V \times V  \to \C$,
as
\begin{equation}
	\mathsf{g}(u, v; \y) 
	\coloneqq
	\langle 
		\mathsf{G}
		\left(
			u;\y
		\right), 
		v 
	\rangle_{V' \times V},
	\quad
	\forall u, v \in V,
\end{equation}
being $\langle \cdot, \cdot \rangle_{V'\times V}$ the duality pairing.
Then, the variational formulation of \eqref{eq:generic PPDE problem} reads:
Given $\y \in \text{U}$, find $u(\y) \in V$ such that
\begin{equation}\label{eq: generic variational formulation}
	\mathsf{g}(u(\y), v; \y) 
    	= 0,
	\quad
	\forall v \in V.
\end{equation}
In Sections \ref{sec:helmholtz_problem_parametric} and \ref{eq:sec_maxwell_cavity}, we consider the particular instance of a linear operator $\mathsf{G}$.

\subsection{The Discrete Full-order Model}
\label{sec:discrete_full_order_model}
Let $V_h \subset V$ be a finite dimensional subspace of $V$
of dimension $N_h$ with underlying discretization parameter $h>0$.

The Galerkin approximation of the variational problem stated
in \eqref{eq: generic variational formulation} reads as follows: 
For a given $\y \in \text{U}$, find $u_h(\y) \in V_h$ such that
\begin{equation}\label{eq: generic FE formulation}
	\mathsf{g}(u_h(\y), v_h; \y) 
	= 
	0,
	\quad
	\forall v_h \in V_h.
\end{equation}
Let $\{\varphi_1, \ldots, \varphi_{N_h}\}$ be a basis of $V_h$.
Each $v_h  \in V_h$ admits the following unique
representation
\begin{align}
	v_h
	=
	\sum_{m=1}^{N_h}
	c_m
	\left(
		v_h
	\right)
	\varphi_{m},
\end{align}
where $c_m: V_h \rightarrow \IC$ are linear functionals representing the degrees of freedom
of the FE space $V_h$.
Therefore, each $v_h \in V_h$ can be uniquely represented by
the sequence $\left\{c_m(v_h)\right\}_{m=1}^{N_h}$ according to
\begin{align}\label{eq:rel_dof_FE}
	V_h 
	\ni
	v_h
	\Longleftrightarrow
	{\bf v}_h
	\coloneqq
	\begin{pmatrix}
		c_1\left(v_h\right),& 
		\dots, &
		c_{N_h}\left(v_h\right)
	\end{pmatrix}^\top
	\in 
	\mathbb{C}^{N_h},
\end{align}

Problem \ref{eq: generic FE formulation} can be cast in the 
following algebraic form: For each $\y \in \text{U}$,
find $\mathbf{u}_h(\y) \in \C^{N_h}$ such that
\begin{eqnarray}\label{eq: algebraic FE formulation}
	\mathbf{G}_h(\mathbf{u}_h(\y); \y)
	=
	\boldsymbol{0} \in \C^{N_h},
\end{eqnarray}
where the residual vector $\mathbf{G}_h(\mathbf{u}_h; \y)$ is defined as
\begin{equation}
	\left(
		\mathbf{G}_h(\mathbf{u}_h; \y)
	\right)_i 
	\coloneqq 
	\mathsf{g}(u_h(\y), \varphi_i), 
	\quad
	i = 1, \ldots, N_h.
\end{equation}
The solution vector $\mathbf{u}_h(\y)$ is obtained by
solving a system of linear equations of  size $N_h \times N_h$
for linear problems, while in the non-linear case,
iterative methods must be employed. %

\subsection{Parametric Domain Transformations}
\label{sec: A family of domain mappings}
As previously pointed out, we focus on problem where the parametric input
define shape deformations. 
We consider the following family of parametric
domain transformations with respect to a
bounded, Lipschitz domain $\text{D}_0$ referred to as the 
reference domain: %
For each $\y \in \text{U}$, we define
$\boldsymbol T(\cdot,\y): \text{D}_0 \to \R^3$ as
\begin{equation}\label{eq: sinusidal mapping}
	\boldsymbol{T}(\widehat{\bf x};\y)
	\coloneqq
	\boldsymbol{T}_0(\widehat{\bf x})
	+ 
	\sum_{j \geq 1}
	y_j
	\boldsymbol T_j(\widehat{\bf x}),
	\quad
	\widehat{\bf x}\in \text{D}_0,
	\quad
	\y
	=
	\{y_j\}_{j\geq1}
	\in 
	\text{U},
\end{equation}
where $\boldsymbol{T}_j: \text{D}_0 \rightarrow \mathbb{R}^3$, $j \in \IN_0$.
In principle, we allow for possible countable infinite parametric inputs. However,
in the practical computational implementation we consider a truncation in the parametric dimension.
 
In the following, we work under the assumptions stated below.

\begin{assumption}\label{assumption:parametric_holomorphy}
For $j\in \IN$, 
set $b_j = \norm{\boldsymbol{T}_j}_{W^{1,\infty}\left({\normalfont\text{D}}_0;\mathbb{R}^3\right)}$.
\begin{itemize}
	\item[(i)]
	For each $\y \in \normalfont\text{U}$ the domain transformation
	$\boldsymbol{T}(\cdot;\y)$ is bijective and bi-Lipschitz, and 
	\begin{equation}\label{eq:deformed_domain}
		\normalfont\text{D}(\y)
		\coloneqq \left\{{\bf x} \in \mathbb{R}^3: \, {\bf x} 
		= 
		\boldsymbol{T}(\widehat{\bf x};\y), \; \widehat{\bf x} \in \normalfont\text{D}_0\right\}
	\end{equation}
	defines a bounded, Lipschitz domain in $\mathbb{R}^3$.
	\item[(ii)]
	There exists $p \in (0,1)$ such that
	$\boldsymbol{b} = \{b_j\}_{j\in \mathbb{N}} \in \ell^p(\mathbb{N})$.
\end{itemize}
\end{assumption}

As in \cite{Aylwin2020}, we consider the following
setting for the numerical experiments to be presented ahead
in Section \ref{sec: Numerical results}: $\text{D}_0 \coloneqq (-1,1)^3$, $\boldsymbol{T}_0(\widehat{\bf x}) = \widehat{\bf x}$
together with
\begin{equation}\label{eq:def_perturbation}
    \boldsymbol T_j(\mathbf{x})
    \coloneqq 
    \mu_j
    \left (
    \begin{array}{c}
        0\\
        0\\
        \sin(\pi j \hat{x}_1)
    \end{array}\right), 
    \quad
    \boldsymbol{x} = (\widehat{x}_1,\widehat{x}_2,\widehat{x}_3)^\top
    \in \text{D}_0,
\end{equation}
i.e.~we consider a sinusoidal deformation of the cube in the third coordinate. 
In addition, we use the following two choices for $ \mu_j$.
\begin{itemize}
	\item
	{\bf Algebraic Decay.} We assume that $\mu_j(r,\theta) = \theta j^{-(r+1)}$
	for $\theta>0$ and $r>1$.
    	\item 
    	{\bf Mat\'ern-Like Covariance Decay.} 
	We consider the following sequence mimicking 
	the decay of the eigenvalues of the Mat\'ern-Like Covariance operator, see \cite{williams2006gaussian} (Chapter 4.2.1) for details, i.e. we set
	\begin{equation}
		\mu_j(a,\nu,\theta) 
		= 
		\theta
		\frac{a^\nu}{{(a + \pi^2 j^2)}^{\nu + 0.5}}\frac{\Gamma(\nu +0.5)}{\Gamma(\nu)}, 
		\quad a = \frac{2\nu}{l^2},
	\end{equation}
	where $l$ corresponds to the length scale, 
	$\nu$ models the roughness of the perturbation
	and $\Gamma$ is the Gamma function.
	In particular, choosing smaller length scales $l$ leads to
	reduced importance decay along the first parametric dimensions,
	which in turns leads to a richer structure of perturbations
	compared to the algebraic decay.
	This is visualized ahead in the numerical results section.
	
\end{itemize}

\begin{remark}
Observe that for the first case, i.e. the algebraic decay, one has 
$\left\{\mu_j(r,\theta)  \right\}_{j\in \mathbb{N}}  \in \ell^p(\mathbb{N})$ for $ \frac{1}{r}<p<1$,
and in the second case $\left\{\mu_j(a,\nu,\theta)  \right\}_{j\in \mathbb{N}}  \in \ell^p(\mathbb{N})$
for any $\frac{1}{2\nu}<p<1$.
\end{remark}

\subsection{The Helmholtz Impedance Problem in Parametric Domains}
\label{sec:helmholtz_problem_parametric}
Herein, we consider the Helmholtz problem equipped with impedance boundary
conditions on a parametrically defined, bounded Lipschitz domain
$\text{D}(\y)$, $\y \in \text{U}$, with boundary $\Gamma(\y) \coloneqq \partial \text{D}(\y)$.
Here, $\text{D}(\y)$ is as in \eqref{eq:deformed_domain},~i.e. for each $\y \in \text{U}$
the image through the affine-parametric domain transformation of the reference domain $\text{D}_0$.

Let $f \in L^2(\text{D}(\y)), \,g \in L^2(\Gamma(\y))$ and $\kappa > 0$ be given.
For each $\y \in \text{U}$, we consider the problem of finding $u: \text{D}(\y) \rightarrow \C$
such that
\begin{align}\label{eq: helmholtz problem with impedance boundary condition}
	-\Delta u(\y) - \kappa^2 u(\y) 
	= 
	f \quad \text{in }
	\text{D}(\y)
	\quad
	\text{and}
	\quad
	\frac{
		\partial u
	}{
		\partial{\boldsymbol{\nu}_{\Gamma(\y)}}
	} 
        - 
        \imath 
        \kappa 
        \gamma_{\Gamma(\y)} 
        u(\y) 
        = g
        \quad 
        \text{on } \Gamma(\y),
\end{align}
where
\begin{equation}\label{eq:trace_operators}
	\frac{
		\partial u
	}{
		\partial{\boldsymbol{\nu}_{\Gamma(\y)}}
	}
	:
	H^1(\text{D}(\y),\Delta)
	\rightarrow
	H^{-\frac{1}{2}}(\Gamma(\y))
	\quad
	\text{and}
	\quad
	\gamma_{\Gamma(\y)} 
	:
	H^1(\text{D}(\y))
	\rightarrow 
	H^{\frac{1}{2}}(\Gamma(\y))
\end{equation}
denote the Neumann and Dirichlet trace operators.
In \eqref{eq:trace_operators}, we have that
$
	H^1(\text{D}(\y),\Delta) 
	\coloneqq \left \{  u \in H^1(\text{D}(\y)): \, \Delta u \in L^2(\text{D}(\y))\right\},
$
and $\boldsymbol{\nu}(\y) $ signifies the outward-pointing normal vector to $\Gamma(\y)$.
The interior impedance Helmholtz problem admits the following variational formulation.

\begin{problem}[Helmholtz Impedance Problem in $\normalfont\text{D}(\y)$]\label{pbrm:helmholtz}
Let $f \in L^2(\normalfont\text{D}(\y)), \,g \in L^2(\Gamma(\y))$ for each $\y \in \normalfont\text{U}$.
For each $\y \in \normalfont\text{U}$ we seek
$u(\y) \in H^1(\normalfont\text{D}(\y))$ such that
\begin{equation}	
	\mathsf{a}(u(\y), v;\y) 
	= 
	\ell(v;\y),
	\quad
	\forall v \in H^1(\normalfont\text{D}(\y)),
\end{equation}
where the parameter-dependent sesquilinear form
$\mathsf{a}(\cdot,\cdot;\y): H^1(\normalfont\text{D}(\y)) \times H^1(\normalfont\text{D}(\y)) \rightarrow \IC$
is defined for each $\y\in \normalfont\text{U}$ as
\begin{align}
\label{eq: Helmholtz sesquilinear form physical}
	\mathsf{a}
	\left(
		u, v;\y
	\right) 
	\coloneqq
	\int\limits_{\normalfont\text{D}(\y)} 
	\left(
		\nabla u \cdot \nabla \overline{v} 
		- 
		\kappa^2 u \overline{v}
	\right) 
	\normalfont\text{d}{\bf x} 
	- 
	\imath 
	\kappa 
	\int\limits_{\Gamma(\y)} 
	\gamma_{\Gamma(\y)} u   \gamma_{\Gamma(\y)}  \overline{v} 
	\,
	\text{ds},
	\quad
	\forall
	u,v \in H^1(\normalfont\text{D}(\y))
\end{align}
and the parameter-dependent anti-linear form
$\ell(\cdot;\y): H^1(\normalfont\text{D}(\y)) \rightarrow \mathbb{C}$
is defined for each $\y \in \normalfont\text{U}$ as
\begin{align}\label{eq: Helmholtz antilinear form physical}
	\ell(v;\y)
	\coloneqq
	\int\limits_{\normalfont\text{D}(\y)} 
	f \overline{v} 
	\,
	\normalfont\text{d}{\bf x} 
	+ 
	\int\limits_{\normalfont\Gamma(\y)}
	g   \gamma_{\Gamma(\y)}  \overline{v}
	\,
	\text{ds}_{\bf x},
	\quad
	\forall
	v \in H^1(\normalfont\text{D}(\y)).
\end{align}
\end{problem}

By invoking the Banach-Nečas-Babuška theorem, Gårding's inequality, and the injectivity
of the sesquilinear form $\mathsf{a}(\cdot,\cdot;\y): H^1(\normalfont\text{D}(\y))\times H^1(\normalfont\text{D}(\y)) \rightarrow \mathbb{C}$,
one may establish well-posedness of Problem \ref{pbrm:helmholtz}
(cf. \cite[Theorem 35.5]{Ern_2021_II}), pointwise for each $\y \in \normalfont\text{U}$.

\subsubsection{Helmholtz problem in the reference domain}

To recast the integrals in \eqref{eq: Helmholtz sesquilinear form physical}
and \eqref{eq: Helmholtz antilinear form physical} in terms of $\text{D}_0$,
we recall the following formulas for domain and boundary transformations,
which can be found in \cite{Boffi_2013}.
Denote by $d\boldsymbol T(\y)$ the Jacobian matrix of the transformation
$\boldsymbol T(\y)$ and by $J(\y)$ its Jacobian determinant. 
For each $\y\in \text{U}$ we define $\Phi(\y) : H^1(\text{D}(\y)) \rightarrow H^1(\text{D}_0)$
as $\left(\Phi(\y) v\right)(\widehat{\bf x}) = v(\boldsymbol{T}(\widehat{\bf x};\y))$, $\widehat{\bf x} \in \text{D}_0$,
which is usually referred to as the \emph{plain pullback} operator.
One has that for each $\y \in \text{U}$, $\Phi(\y)$ is a bounded linear operator with a bounded inverse, 
see e.g. \cite[Section 3]{CSZ18} and \cite[Lemma 1]{HPS2016}.

Furthermore, for each $\y \in \text{U}$ we define the surface Jacobian on
the boundary $\Gamma(\y)$ as
\begin{equation}
	J_{\text{S}}(\y) 
	= 
	J(\y) 
	\norm{
		d\boldsymbol T^{-\top}(\y)
		\widehat{\boldsymbol{\nu}}(\y)
	}
	\in L^\infty(\text{D}_0)
\end{equation}
where $\widehat{\boldsymbol{\nu}}(\y) \coloneqq \Phi(\y)  {\boldsymbol{\nu}}_{\Gamma(\y)} \in L^\infty(\Gamma_0;\IR^3)$.
For $\widehat{v} \in H^1(\text{D}_0)$ we set
$v \coloneqq \Phi^{-1}(\y) \widehat{v} $.
Then, we have
\begin{equation}
	\nabla v 
	= 
	d\boldsymbol{T}^{-\top}\widehat{\nabla} \widehat{v} \circ \boldsymbol T^{-1},
	\quad
	\int\limits_{\text{D}(\y)} 
	v 
	\,
	\normalfont\text{d}{\bf x}  
	=
	\int\limits_{\text{D}_0} 
	\widehat{v} 
	J(\y) 
	\normalfont\text{d}\widehat{\bf x},
	\quad
	\text{and}
	\quad
	\int\limits_{\Gamma(\y)} 
	v 
	\text{ds}
	= 
	\int\limits_{\Gamma_0} 
	\widehat{v} 
	J_{\text{S}}(\y)
	\text{d}\widehat{\text{s}}_{\widehat{\bf x}},
\end{equation}
where the latter identity is also called \emph{Nanson's formula},
and $\widehat{\nabla}$ denotes the grandient operator in the reference domain.
This allows us to state the variational formulation
for the Helmholtz equation in the reference domain.

\begin{problem}[Helmholtz Impedance Problem in the Reference Domain]\label{eq: Helmholtz nominal problem}
For each $\y \in \normalfont\text{U}$, we seek $\widehat{u}(\y) \in H^1({\normalfont\text{D}}_0)$ such that
\begin{eqnarray}%
	\widehat{\mathsf{a}}
	\left(
		\widehat{u}(\y)
		, 
		\widehat{v}
		;
		\y
	\right)
	= 
	\widehat{\ell}
	\left(
		\widehat{v};\y
	\right)
	,
	\quad
	\forall \widehat{v} \in H^1({\normalfont\text{D}}_0),
\end{eqnarray}
where for each $\y \in \normalfont\text{U}$ the sesquilinear form
$\widehat{\mathsf{a}}(\cdot,\cdot;\y): H^1({\normalfont\text{D}}_0) \times H^1({\normalfont\text{D}}_0) \rightarrow \IC$
is defined as
\begin{equation}\label{eq:sesquilinear_Helmholtz}
\begin{aligned}
	\widehat{\mathsf{a}}
	\left(
		\widehat{v}
		, 
		\widehat{w}
		;
		\y
	\right) 
	\coloneqq
	&
	\int\limits_{{\normalfont\text{D}}_0} 
	\left(
		d\boldsymbol{T}^{-\top}(\y) \widehat{\nabla} \widehat{v} 
		\cdot  
		d\boldsymbol{T}^{-\top}(\y) \widehat{\nabla} \overline{\widehat{w}} 
		- 
		\kappa^2 \widehat{v} \overline{\widehat{w}}
	\right) 
	J(\y)
	\normalfont\text{d}\widehat{\bf x}
	\\
	&
	- 
	\imath \kappa 
	\int\limits_{\Gamma_0} 
	\widehat{v} 
	\overline{\widehat{w}} 
	J_{\text{S}}(\y) 
	\normalfont\text{d}{\text{s}}_{\widehat{\bf x}},
	\quad
	\forall
	\widehat{v},
	\widehat{w}
	\in H^1({\normalfont\text{D}}_0),
\end{aligned}
\end{equation}
and the anti-linear form $\widehat{\ell}(\cdot;\y): H^1({\normalfont\text{D}}_0)\rightarrow \IC$  
is defined as
\begin{align}\label{eq:anti_linear_Helmholtz}
	\widehat{\ell}(\widehat{v};\y)
	\coloneqq
	\int\limits_{{\normalfont\text{D}}_0} 
	\widehat{f}(\y) \overline{\widehat{v}} 
	J(\y) 
	\normalfont\text{d}\widehat{\bf x}
	+ 
	\int\limits_{\Gamma_0} 
	\widehat{g}(\y) 
	\gamma_{\Gamma_0}\overline{\widehat{v}} 
	J_{\text{S}}(\y) 
	\text{d}{\text{s}}_{\widehat{\bf x}},
	\quad
	\forall
	v \in H^1({\normalfont\text{D}}_0),
\end{align}
where $\widehat{f}(\y) \coloneqq \Phi(\y)f \in L^2(\normalfont\text{D}_0)$ and $\widehat{g}(\y) \coloneqq \Phi(\y)g \in L^2(\Gamma_0)$.
\end{problem}

One can readily see that the unique solvability of Problem \ref{pbrm:helmholtz} together
with the properties of the plain pullback operator straightforwardly entail the well-posedness of
Problem \ref{eq: Helmholtz nominal problem}, and indeed it holds $\Phi(\y)u(\y) = \widehat{u}(\y)$ for each 
$\y \in \text{U}$, where $u(\y) \in H^1(\text{D}(\y))$ and $\widehat{u}(\y)\in H^1(\text{D}_0)$ are the unique solutions 
to Problems \ref{pbrm:helmholtz} and \ref{eq: Helmholtz nominal problem}, respetively.

\subsubsection{Discrete Full-Order Model for Helmholtz Impedance Problem}
\label{sec:discrete_FOM_Helmholtz}
The well-posedness of the discrete problem with $H^1$-conforming finite elements carries over
from the case without domain parametrization. Hence, we will later use continuous Lagrangian
FE defined on a suitable mesh of $\text{D}_0$ to approximate Problem \ref{eq: Helmholtz nominal problem}.
Consequently, the model presented in this section fits the general framework introduced in 
Section \ref{eq:rom_and_ppdes} and \ref{sec:discrete_full_order_model}: the role of the Hilbert space $V$ in Section \ref{eq:rom_and_ppdes}
is played by the Sobolev space $H^1(\text{D}_0)$, the role of $V_h$ by the continuous Lagrangian
FE, and $\mathsf{g}(\cdot,\cdot;\y)=\widehat{\mathsf{a}}(\cdot,\cdot;\y) - \widehat{\ell}(\cdot;\y)$ for each $\y \in \text{U}$, with $\widehat{\mathsf{a}}\left(\cdot, \cdot;\y\right)$ as in \eqref{eq:sesquilinear_Helmholtz} and $\widehat{\ell}(\cdot;\y)$ as in \eqref{eq:anti_linear_Helmholtz}.

Regarding the convergence of the full-order model for the Helmholtz impedance 
problem with respect to the discretization of the FE space, we have the following result: 
For each $\y \in \text{U}$ there exists $h_0(\y)$ (i.e.~depending on $\y$) such that for
$h<h_0(\y)$ it holds
\begin{equation}
	\norm{
		\widehat{u}(\y)
		-
		\widehat{u}_h(\y)
	}_{H^1(\text{D}_0)}
	\leq
	C(\y)
	\inf_{v_h \in V_h}
	\norm{
		\widehat{u}(\y)
		-
		v_h
	}_{H^1(\text{D}_0)},
\end{equation}
for some $C(\y)>0$, where $\widehat{u}_h(\y)$ is solution to the following variational problem
\begin{eqnarray}%
	\widehat{\mathsf{a}}
	\left(
		\widehat{u}_h(\y)
		, 
		\widehat{v}_h
		;
		\y
	\right)
	= 
	\widehat{\ell}
	\left(
		\widehat{v}_h;\y
	\right)
	,
	\quad
	\forall \widehat{v}_h \in V_h
\end{eqnarray}
As it is customary for variational problems satisfying Garding-type inequalities, a minimal level 
of resolution of the FE space is required to obtain quasi-optimality, and, therefore, convergence of the Galerkin method. It is important to point out that the minimal level of resolution $h_0(\y)$ and
$C(\y)$, which in principle depend of the particular instance of $\y \in \text{U}$, can be made independent of $\y \in \text{U}$ by using a finite covering argument and the compactness
of $\text{U}$.

\subsection{The Maxwell Lossy Cavity Problem in Parametric Domains}
\label{eq:sec_maxwell_cavity}
As a second model problem, we consider a time-harmonic electromagnetic
cavity problem with circular frequency $\omega > 0$ in a parametrically defined
bounded, Lipschitz domain $\text{D}(\y)$, for each $ \y \in \text{U}$ as in \eqref{eq:deformed_domain}.

For simplicity, we consider a constant complex domain conductivity $\sigma$
as well as a constant complex dielectric permittivity $\varepsilon$ and magnetic
permeability $\mu$.
In addition, we also introduce the quantity
$\Lambda \coloneqq \omega^2 \varepsilon - \imath \omega \sigma$.
As in \cite{Aylwin2020}, we make the following assumptions:
There exists $\vartheta \in [0, 2\pi)$ such that
\begin{equation}\label{eq:assumption_on_maxwell_constants}
	\mu_b 
	\coloneqq
	\Re
	\left\{
		e^{\imath \vartheta} \mu^{-1}
	\right\} 
	> 0 
	\quad
	\text{and}
	\quad
	\Lambda_b
	\coloneqq
	\Re
	\left\{
		-e^{\imath\vartheta} \Lambda
	\right\} 
	> 0.
\end{equation}
Lastly, we assume a continuous source current density,
i.e.~for each $\y\in \text{U}$ we assume
$\boldsymbol J \in \mathscr{C}^0\left(\text{D}(\y);\C^3\right)$.

In this simplified setting, let $\boldsymbol E$ and $\boldsymbol H$ be the
complex-valued electric and magnetic fields, respectively. Maxwell equations in $\text{D}(\y)$
read (see, e.g.,~\cite{Monk})
\begin{align}
\begin{cases}
	\curl\boldsymbol E + \imath \omega \mu \boldsymbol H 
	&= 0
	\quad
	\text{in } \text{D}(\y),\\
    	(\imath \omega \varepsilon + \sigma) \boldsymbol E - \curl \boldsymbol H 
	&= 
	-\imath\omega \boldsymbol J
	\quad
	\text{ in } \text{D}(\y).
\end{cases}
\end{align}
By defining the quantity $\kappa^2 \coloneqq \omega^2 \mu \varepsilon - \imath \omega \mu \sigma$
and applying the curl operator to the first equation, we can reduce the system to
\begin{equation}\label{eq: maxwell reduced}
	\curl (\curl \boldsymbol E) 
	- 
	\kappa^2 \boldsymbol E 
	= 
	-
	\imath \omega \mu 
	\boldsymbol J
	\quad
	\text{in }
	\text{D}(\y)
\end{equation}
On the boundary, we assume perfect electrical conductor (PEC) boundary conditions
\begin{equation}
	\gamma_d^\times \boldsymbol E 
    	= 
	\boldsymbol{0}
	\quad
	\text{on }
	\Gamma(\y),
\end{equation}
where $\gamma_d^\times$ denotes the flipped tangential trace, i.e.
$\gamma_d^\times \boldsymbol E \coloneqq \gamma_d\left(\boldsymbol n \times \boldsymbol E\right)$. Thus, only the tangential component electric field on the boundary of the domain vanishes.\\

We define $H(\Curl; \text{D}(\y))$ and $H_0(\Curl; \text{D}(\y))$ as
\begin{align}
    H(\Curl; \text{D}(\y)) &\coloneqq \{\boldsymbol u \in L^2(\text{D}(\y))^3 \mid \curl \boldsymbol u \in L^2(\text{D}(\y))^3\},\\
    H_0(\Curl; \text{D}(\y)) &\coloneqq \{\boldsymbol u \in H(\Curl; \text{D}(\y)) \mid \gamma^\times_D \boldsymbol u = 0\},
\end{align}
and we equip them with the norm
\begin{equation*}
	\norm{
		\boldsymbol u
	}_{H\left(\Curl; \text{D}(\y)\right)} 
	\coloneqq
	\norm{
		\boldsymbol u
	}_{L^2\left(\text{D}(\y)\right)} 
	+ 
	\norm{
		\curl \boldsymbol u
	}_{L^2\left(\text{D}(\y)\right)}.
\end{equation*}

The variational formulation of \eqref{eq: maxwell reduced} on the physical domain $\text{D}(\y)$
reads as follows.

\begin{problem}[Maxwell Cavity Problem in $\normalfont\text{D}(\y)$]\label{prb:maxwell_variational}
For each $\y \in \normalfont\text{U}$, we seek
$\boldsymbol{E}(\y) \in H_0\left(\Curl;\normalfont\text{D}(\y)\right)$
such that
\begin{equation}\label{eq:variational_maxwell_problem}
        \mathsf{a}
        \left(
        		\bE(\y)
		,
		\bV; \y
	\right)
        = 
        \ell
        \left(
        		\boldsymbol{V};\y
	\right),
        \quad \forall 
        H_0\left(\Curl; \normalfont\text{D}(\y)\right)
\end{equation}
with the parameter-dependent sesquilinear form
$\mathsf{a}(\cdot,\cdot;\y): H_0\left(\Curl;\normalfont\text{D}(\y)\right) \times H_0\left(\Curl;\normalfont\text{D}(\y)\right)\rightarrow \IC $ form
\begin{align}
	\mathsf{a}
	\left(
		\bV, \bW;\y
	\right)
	\coloneqq
	\int\limits_{\normalfont\text{D}(\y)} 
	\left(
		\mu^{-1} 
		(\curl \bV) \cdot (\curl \overline{\bW})  
		- 
		\Lambda 
		\bV 
		\cdot
		\overline{\bW} 
	\right)
	\normalfont\text{d}{\bf x},
	\quad
	\forall
	\bV, \bW
	\in 
	H_0\left(\Curl;\normalfont\text{D}(\y)\right)
\end{align}
and the parameter-dependent anti-linear form
$\ell(\cdot;\y): H_0\left(\Curl;\normalfont\text{D}(\y)\right)  \rightarrow \IC $
\begin{align}
	\ell
	\left(
		\bV;\y
	\right)
	\coloneqq 
	- \imath
	\omega 
	\int\limits_{\normalfont\text{D}(\y)}
	\boldsymbol J \cdot \overline{\bV}
	\normalfont\text{d}{\bf x},
	\quad
	\bV \in H_0\left(\Curl;\normalfont\text{D}(\y)\right),
\end{align}
where $\boldsymbol J \in \mathscr{C}^0\left(\normalfont\text{D}(\y);\C^3\right)$.
\end{problem}

Due to the assumptions in \eqref{eq:assumption_on_maxwell_constants},
we can apply the Lax-Milgram lemma to show the well-posedness of
Problem \ref{prb:maxwell_variational}.
Indeed, for a fixed parameter $\y \in \text{U}$ and any
$\bV \in H_0\left(\Curl;\normalfont\text{D}(\y)\right)$ it holds
\begin{equation}\label{eq:proof_coercivity_Maxwell}
\begin{aligned}
	\snorm{\mathsf{a}(\bV, \bV;\y)}
	\geq 
	\min \{\mu_b, \Lambda_b\} 
	\norm{
		\bV
	}^2_{H\left(\Curl;\normalfont\text{D}(\y)\right)},
\end{aligned}
\end{equation}
with $\mu_b, \Lambda_b$ as in \eqref{eq:assumption_on_maxwell_constants}.

Consequently, for each $\y \in \text{U}$ the sesquilinear $\mathsf{a}(\cdot,\cdot;\y)$ is
coercive according to \eqref{eq:proof_coercivity_Maxwell}.
Observe that the coercivity constant does not depend on $\y \in \normalfont\text{U}$.
In addition, since both $\mathsf{a}(\cdot,\cdot;\y)$ and $\ell(\cdot;\y)$ are continuous
the Lax-Milgram lemma applies, thus yielding well-posedness of
Problem \ref{prb:maxwell_variational} for each $\y \in \text{U}$.

\subsubsection{Maxwell Cavity Problem in the Reference Domain}
One noteworthy difference tothe Helmholtz problem is
that we have to define the pullback operator $\Phi: V \to V_0$ differently.
In fact, to preserve $H(\Curl; \text{D})$-confomity of the fields, we need to use the following
domain transformation, defined for each $\y \in \normalfont\text{U}$ as
\begin{eqnarray}\label{eq: Maxwell pullback operator}
	\Phi(\y)(\boldsymbol v)
	\coloneqq
	d\boldsymbol{T}(\y)^{-\top}
	(\boldsymbol v \circ \boldsymbol T(\y)),
	\quad
	\forall \boldsymbol v \in V(\y).
\end{eqnarray}

As discussed in \cite[Lemma 2.2]{Ern_2017}, for each $\y \in \normalfont\text{U}$ the pullback operator
$\Phi(\y)$ in \eqref{eq: Maxwell pullback operator} admits 
bounded extension $\Phi(\y) \in \mathscr{L}_{\normalfont\text{iso}}\left(\hcurl, H(\Curl; \text{D}(\y))\right)$.
In addition, for each $\boldsymbol u \in H(\Curl; \text{D}(\y))$
it holds that
\begin{equation}
	\curl \left(\Phi(\y) \boldsymbol u\right)
	= 
	\det(d\boldsymbol{T}(\y)) d\boldsymbol{T}^{-1}(\y)((\curl \boldsymbol u) \circ \boldsymbol{T}(\y)).
\end{equation}

For ease of notation, let us also define the pullback of the current density
$\boldsymbol{J}(\y) \coloneqq \boldsymbol{J} \circ \boldsymbol{T}_y \in \mathscr{C}^0\left(\normalfont\text{D}_0;\C^3\right)$.
With the previous Lemma at hand, we can now state the nominal variational
problem for the Maxwell cavity problem (cf.~\cite{Aylwin2020}).

\begin{problem}[Maxwell Cavity Problem in the Reference Domain]
For each $\y \in \normalfont\text{U}$, we seek
$\widehat{\boldsymbol{E}}(\y) \in H_0\left(\Curl;\normalfont\text{D}_0\right)$
such that
\begin{equation}\label{eq: Maxwell nominal problem}
        \widehat{\mathsf{a}}
        \left(
       	\widehat{\boldsymbol{E}}(\y)
        , 
      	\widehat{\boldsymbol{V}};
	\y
       	\right)
        = 
        \widehat{\ell}
        \left(\widehat{\boldsymbol{V}};\y\right), 
        \quad \forall \widehat{\boldsymbol{v}} \in H_0\left(\Curl;\normalfont\text{D}_0\right),
\end{equation}
where the sesquilinear form
$\widehat{\mathsf{a}}(\cdot,\cdot;\y): H_0\left(\Curl;\normalfont\text{D}_0\right) \times H_0\left(\Curl;\normalfont\text{D}_0\right)\rightarrow \mathbb{C}$
is given by
\begin{equation}\label{eq:sesquilinear_Maxwell}
\begin{aligned}
	\widehat{\mathsf{a}}\left(\widehat{\boldsymbol{u}}, \widehat{\boldsymbol{v}};\y\right) 
	= 
	&
	\int\limits_{\normalfont\text{D}_0} 
	J(\y)^{-1} 
	\mu^{-1}
	\left(
		d\boldsymbol{T}(\y) (\curl \widehat{\boldsymbol{u}})
	\right) 
	\cdot
	\left(
		d\boldsymbol{T}(\y)(\curl \overline{\widehat{\boldsymbol{v}}})
	\right) 
	\\
	&
	- 
	\Lambda J(\y) 
	d\boldsymbol{T}^{-\top}(\y)
	\widehat{\boldsymbol{u}} 
	\cdot 
	d\boldsymbol{T}^{-\top}(\y)
	\overline{\widehat{\boldsymbol{v}}} 
	\normalfont\text{d}\widehat{\boldsymbol{x}}
\end{aligned}
\end{equation}
and, for each $\y \in \normalfont\text{U}$,
the anti-linear form $\widehat{\ell}(\y): H_0\left(\Curl;\normalfont\text{D}_0\right) \rightarrow \mathbb{C}$
is given by
\begin{align}\label{eq:anti_linear_Maxwell}
	\widehat{\ell}(\widehat{\boldsymbol v};\y)
	\coloneqq
	-\imath \omega 
	\int\limits_{\normalfont\text{D}_0} 
	J(\y) 
	\boldsymbol{J}(\y) 
	\cdot d\boldsymbol T^{-\top}(\y) \overline{\widehat{\boldsymbol{v}}}
	\normalfont\text{d}\widehat{\boldsymbol{x}}.
\end{align}
\end{problem}

\subsubsection{Discrete Full-Order Model for Maxwell Cavity Problem}
\label{sec:discrete_FOM_Maxwell}
The Maxwell cavity problem is approximated in an $H(\Curl; \cdot)$-conforming 
fashion using Nédélec elements as described in the reference works \cite{Boffi_2013, Ern_2021_I},
and \cite{Monk}.

The model presented in this section fits the general framework introduced in 
Section \ref{eq:rom_and_ppdes} and \ref{sec:discrete_full_order_model}:  the role of the Hilbert space $V$ in Section \ref{eq:rom_and_ppdes}
is played by the space $H_0(\Curl;\text{D}_0)$, the role of $V_h$ by the curl-conforming Nédélec elements
as discussed previously, whereas $\mathsf{g}$ is replaced by
$\mathsf{g}(\cdot,\cdot;\y)=\widehat{\mathsf{a}}(\cdot,\cdot;\y) - \widehat{\ell}(\cdot;\y)$ for each $\y \in \text{U}$, with $\widehat{\mathsf{a}}\left(\cdot, \cdot;\y\right)$ as in 
\eqref{eq:sesquilinear_Maxwell} and $\widehat{\ell}(\cdot;\y)$ as in \eqref{eq:anti_linear_Maxwell}.

Indeed, as a consequence of the assumption stated
in \eqref{eq:assumption_on_maxwell_constants}  Cea's lemma holds as
\begin{equation}
	\norm{
		\widehat{\boldsymbol{E}}(\y)
		-
		\widehat{\boldsymbol{E}}_h(\y)
	}_{H^1(\text{D}_0)}
	\leq
	C
	\inf_{v_h \in V_h}
	\norm{
		\widehat{\boldsymbol{E}}(\y)
		-
		v_h
	}_{H^1(\text{D}_0)},
\end{equation}
where $\widehat{\boldsymbol{E}}_h(\y)  \in V_h $ is the unique solution to 
the following variational problem
\begin{equation}%
        \widehat{\mathsf{a}}
        \left(
       	\widehat{\boldsymbol{E}}_h(\y)
        , 
      	\widehat{\boldsymbol{V}}_h;
	\y
       	\right)
        = 
        \widehat{\ell}
        \left(\widehat{\boldsymbol{V}}_h;\y\right), 
        \quad \forall \widehat{\boldsymbol{V}}_h \in V_h.
\end{equation}

\section{Projection-based Reduced Order Modeling}
\label{sec:projection_based_ROM}
The Galerkin approximation of the shape-parametric Helmholtz impedance
and Maxwell lossy cavity problems,
as presented in Sections \ref{sec:helmholtz_problem_parametric} and \ref{eq:sec_maxwell_cavity},
for each parametric input entails a high computational cost. 
For many-query applications or real-time computations, one needs a fast and accurate methodology to evaluate the parameter-to-solution map for each particular instance of the parametric input. 

This motivates the use of model order reduction techniques such as
the reduced basis method. Instead of seeking a solution
for each parametric input in a suitable finite-dimensional subspace, we solve the problem 
in a reduced space of a much smaller dimension than that of the full-order model. 

\subsection{The Reduced Basis Method}\label{sec:reduced_basis_method}
Assume that we have access to a reduced basis $V^{(\text{rb})}_L$ of
dimension $L \ll N_h$ of the form
$V^{(\text{rb})}_L  = \text{span}\{\psi_1, \ldots, \psi_L\}\subset V_h$.
We discuss one possible way to construct such a basis in Section \ref{sec:POD}.
In practical applications one considers only finitely many parametric inputs. 
To this end, we define $\truncparamspace\coloneqq [-1,1]^{J}$, where $J \in \mathbb{N}$
corresponds to the parametric dimension.
Then, for a given parametric input $\y \in \truncparamspace$,
we seek solutions $u^{\text{(rb)}}_L(\y) \in V^{(\text{rb})}_L $ of the form
\begin{align}
\label{eq: generic reduced basis function}
	u^{\text{(rb)}}_L(\y)
	= 
	\sum_{\ell = 1}^L
	\left(
		\mathbf{u}^{\text{(rb)}}_L(\y)
	\right)_\ell
	\psi_\ell \in V^{\text{(rb)}}_L,
\end{align}
with $\mathbf{u}^{\text{(rb)}}(\y) \in \C^{L}$ being the \emph{reduced coefficients}.

In order to compute the reduced basis solution $u^{\text{(rb)}}_L(\y)$
for a particular $\y \in \truncparamspace$, we follow a standard Galerkin approach,
see e.g.~\cite{Quarteroni_2016}.
We project the variational problem \eqref{eq: generic variational formulation}
onto the reduced space $V^{\text{(rb)}}_L$, thus yielding the following 
\emph{reduced basis problem}: Given $\y \in \truncparamspace$, find $u^{\text{(rb)}}_L(\y) \in V^{\text{(rb)}}_L$ such that
\begin{equation}\label{eq:generic_rb_var_form}
	\mathsf{g}
	\left(
		u^{\text{(rb)}}_L(\y)
		,
		v^{\text{(rb)}}_L
		; \y
	\right) 
	= 
	0,
	\quad \forall v^{\text{(rb)}}_L \in V^{\text{(rb)}}_L.
\end{equation}
As in Section \ref{sec:discrete_full_order_model},
each $\psi_\ell \in V^{(\text{rb})}_L $ can be uniquely represented by
the sequence $\{c_m(\psi_\ell)\}_{m=1}^{N_h}$, which we gather in
the vector $\boldsymbol{\psi}_\ell$.
Next, we define the \emph{reduced basis matrix} as
\begin{equation}
	\mathbb{V}^{\text{(rb)}}_L
	\coloneqq
	\left(
		\boldsymbol \psi_1, \ldots , \boldsymbol\psi_L
	\right) 
	\in 
	\C^{N_h \times L}.
\end{equation}
This matrix encodes the change of basis from the
reduced basis to the FE basis.
Due to \eqref{eq: generic FE formulation}, the reduced basis problem
\eqref{eq:generic_rb_var_form} can be expressed as follows: 
Given $\y \in \truncparamspace$, we seek $\mathbf{u}^{\text{(rb)}}(\y) \in \C^{L}$ such that
\begin{equation}\label{eq: algebraic reduced basis formulation}
	\mathbb{V}^{\text{(rb)}\dagger}_L
	\mathbf{G}_{h}
	\left(
		\mathbb{V}^{\text{(rb)}}_L \mathbf{u}^{\text{(rb)}}_L(\y); \y
	\right) 
	= 
	\boldsymbol{0}
	\in \mathbb{C}^L,
\end{equation}
where $\dagger$ denotes the Hermitian conjugate of $\mathbb{V}^{\text{(rb)}}_L$.
We will henceforth refer to \eqref{eq: algebraic reduced basis formulation} as 
the Galerkin-POD problem or simply G-POD.

Note that G-POD problem, although a system (either linear or non-linear) of $L$ equations, still
requires the assembly of the full-order model as described in {\cite{hesthaven2016certified,Quarteroni_2016}}.
In special cases, e.g.
whenever the dependence of the underlying form $\mathsf{g}$ on the parametric input $\y \in \text{U} $ is affine, 
it is possible that problem \eqref{eq: algebraic reduced basis formulation} turns out to be independent of $N_h$ \cite{Quarteroni_2016}. However, the Helmholtz problem as well as the time-harmonic Maxwell problem presented above do not satisfy this assumption.

\subsection{Reduced Basis Construction: Proper Orthogonal Decomposition}
\label{sec:POD}
A well-known and straightforward approach to construct a reduced basis is the
\emph{proper orthogonal decomposition} (POD) method.
Suppose we have a collection of $N_s$ evaluations of the full-order model
on a finite parameter set $\Xi_{N_s} = \{\y^{(1)}, \ldots, \y^{(N_s)}\} \subset\truncparamspace$, 
the so-called snapshots $\{u_h(\y^{(1)}), \ldots, u_h(\y^{(N_s)})\}$.

For what follows, we assume that a sufficiently large number of snapshots $N_s$
have been computed so that the associated subspace
\begin{align}
	\mathcal{M}_\Xi
    	\coloneqq
    	\mathrm{span}
	\left\{
		u_h\left(\y^{(1)}\right), \ldots, u_h\left(\y^{(N_s)}\right)
	\right\}
    	\subset
	V_h
\end{align}
is a good approximation of the continuous \emph{solution manifold}
\begin{align}\label{eq:sol_manifold}
	\mathcal{M}_h
	\coloneqq
	\{u_h(\y) \mid \y \in \truncparamspace\}.
\end{align}
Thus, we search for a parameter-independent reduced basis $\{\psi_1, \ldots, \psi_L\}$ for $\mathcal{M}_\Xi$, such that $L \ll N_h$ and such that the reduced basis well approximates $\mathcal{M}_\Xi \subset \mathcal{M}_h$.\\

Let $\mathbb{S} \in \C^{N_h \times N_s}$ denote the \emph{snapshot matrix},
defined as
\begin{align}\label{eq:snapshot_matrix}
   	\mathbb{S}
	\coloneqq
    	\left(
    		\mathbf{u}_h\left(\y^{(1)}\right), \ldots, \mathbf{u}_h\left(\y^{(N_s)}\right)
	\right)
	\in
	\mathbb{C}^{N_h \times N_s}.
\end{align}
Let $R$ be the rank of $\mathbb{S}$.
Then the singular value decomposition yields two unitary matrices
\begin{equation}
	\mathbb{W} = \left(\mathbf{w}_1, \ldots, \mathbf{w}_{N_s}\right)
	\in 
	\C^{N_h \times N_h}
	\quad
	\text{and}
	\quad
	\mathbb{Z} = \left(\mathbf{z}_1, \ldots , \mathbf{z}_{N_s}\right) 
	\in 
	\C^{N_s \times N_s}
\end{equation}
and a diagonal matrix
$\mathbb{D} = \mathrm{diag}(\sigma_1, \ldots, \sigma_R)$, with $\sigma_1 \geq \sigma_2 \geq \ldots \geq \sigma_R > 0$, 
$R$ being the rank of $\mathbb{S}$, such that
\begin{align}\label{eq: svd of snapshot matrix}
	\mathbb{S}
	= 
	\mathbb{W}
	\begin{pmatrix}
        		\mathbb{D} & 0 \\
        		0 & 0
    	\end{pmatrix} 
	\mathbb{Z}^\dagger = \mathbb{W} \Sigma \mathbb{Z}^\dagger.
\end{align}
Algebraically speaking, we want to approximate the columns in $\mathbb{S}$ using $L \leq R$ orthonormal vectors $\{\widetilde{\mathbf{w}}_1, \ldots, \widetilde{\mathbf{w}}_L\}$. 
The orthogonal projection of $\mathbf{u}_h\left(\y^{(n)}\right)$ onto $\mathrm{span}\{\widetilde{\mathbf{w}}_1, \ldots, \widetilde{\mathbf{w}}_L\}$ is given by
\begin{align*}
	\sum_{l = 1}^L
	\left(
		\mathbf{u}_h\left(\y^{(n)}\right), \widetilde{\mathbf{w}}_l
	\right)_{\C^{N_h}} \widetilde{\mathbf{w}}_l.
\end{align*}
We seek an orthonormal basis $\{\widetilde{\mathbf{w}}_1, \ldots, \widetilde{\mathbf{w}}_L\}$
such that the quantity
\begin{align}\label{eq:discrete_E}
	\widetilde{\mathscr{E}}^{(N_s)}
	\left(\mathbb{V}_L\right)
	\coloneqq
	\sum_{n = 1}^{N_s}
	\norm{
		\mathbf{u}_h\left(\y^{(n)}\right)
		- 
		\sum_{\ell = 1}^L
		\left(\mathbf{u}_h\left(\y^{(n)}\right), \widetilde{\mathbf{w}}_\ell \right)_{\C^{N_h}} 
		\widetilde{\mathbf{w}}_\ell
	}^2_{\C^{N_h}},
\end{align}
is minimized. The Schmidt-Eckart-Young theorem \cite{Quarteroni_2016}
asserts that the minimum is achieved for the basis $\{\mathbf{w}_1, \ldots, \mathbf{w}_L\}$
consisting of the first $L$ columns of $\mathbb{W}$ in the SVD of $\mathbb{S}$
in \eqref{eq: svd of snapshot matrix}.
Hence, we set $\boldsymbol \psi_l = \mathbf{w}_l$ for $l = 1,\ldots, L$ and thus
\begin{align}\label{eq:reduced_space_L}
	\mathbb{V}^{\text{(rb)}}_L
    	= 
	\left(
		\mathbf{w}_1, \ldots, \mathbf{w}_L
	\right)
	\in 
	\mathbb{C}^{N_h \times L}.
\end{align}

\subsection{Parametric Holomorphy}\label{eq:parametric_holomorphy}
In Sections \ref{sec:reduced_basis_method} and \ref{sec:POD} we have discussed  
computational aspects of the reduced basis method for parametric problems. 
The reduced basis method, and for that matter any other model
order reduction technique, 
can successfully approximate the solution manifold $\mathcal{M}_h$
defined in \eqref{eq:sol_manifold}, and its discrete counterpart $\mathcal{M}_\Xi$,
provided that there exists an intrinsic low-dimensional dynamics driving the behavior 
of solution manifold. 

A commonly used concept in nonlinear approximation to 
quantify uniform error bounds is the so-called 
Kolmogorov’s width.
For a compact subset $\mathcal{K}$ of a Banach space
$X$ it is defined for $L\in \IN$ as
\begin{align}
	d_L(\mathcal{K},X)
	\coloneqq
	\inf _{\operatorname{dim}\left(X_L\right) \leq L} 
	\sup _{v \in \mathcal{K}} 
	\min _{w \in X_L}
	\norm{v-w}_X,
\end{align}
where the outer infimum is taken over all
finite dimensional spaces $X_L\subset X$ of  
dimension smaller than $L$. %
This quantifies
the suitability of $L$-dimensional subspaces
for the approximation of the solution manifold.
As it has been established, see e.g. \cite{hesthaven2016certified,Quarteroni_2016}
the convergence analysis of the reduced basis method relies on the existence
of bounds controlling the decay of the Kolmogorov's width.

A key insight to establish dimension-independent convergence
of Kolmogorov's width for parametric maps with high-dimensional parametric
inputs corresponds to the analytic or holomorphic dependence of the parameter-to-operator
map upon the parametric variables.%

For $s>1$ we define 
the Bernstein ellipse
\begin{align}
	\mathcal{E}_s
	\coloneqq 
	\left\{
		\frac{z+z^{-1}}{2}: \; 1\leq \snorm{z}\leq s
	\right \} 
	\subset \IC.
\end{align}
This ellipse has foci at $z=\pm 1$ and semi-axes of length 
$a\coloneqq  (s+s^{-1})/2$ and $b \coloneqq  (s-s^{-1})/2$.
In addition, we define the tensorized poly-ellipse
\begin{align}
	\mathcal{E}_{\boldsymbol{\rho}} 
	\coloneqq  
	\bigotimes_{j\geq1} 
	\mathcal{E}_{\rho_j} \subset \IC^{\mathbb{N}},
\end{align}
where $\boldsymbol\rho \coloneqq  \{\rho_j\}_{j\geq1}$
is such that $\rho_j>1$, for $j\in \mathbb{N}$.

\begin{definition}[{\cite[Definition 2.1]{CCS15}}]\label{def:bpe_holomorphy}
Let $X$ be a complex Banach space equipped with the norm $\norm{\cdot}_{X}$. 
For $\varepsilon>0$ and $p\in(0,1)$, we say that map 
$\fullparamspace  \ni  \y \mapsto  u(\y)  \in  X$
is $(\boldsymbol{b},p,\varepsilon)$-holomorphic if and only if:
\begin{enumerate}
	\item\label{def:bpe_holomorphy1}
	The map $\fullparamspace \ni {\y} \mapsto u(\y) \in X$ is uniformly bounded.
	\item\label{def:bpe_holomorphy2}
	There exists a positive sequence $\boldsymbol{b}\coloneqq \{b_j\}_{j\geq 1} \in \ell^p(\mathbb{N})$ 
	and a constant $C_\varepsilon>0$ such that for any sequence 
	$\boldsymbol\rho\coloneqq \{\rho_j\}_{j\geq1}$ 
	of numbers strictly larger than one that is
	$(\boldsymbol{b},\varepsilon)$-admissible, i.e.~satisfying
	$\sum_{j\geq 1}(\rho_j-1) b_j  \leq  \varepsilon$,
	the map $\y \mapsto u(\y)$ admits a complex
	extension $\z \mapsto u(\z)$ 
	that is holomorphic with respect to each
	variable $z_j$ on a set of the form 
	\begin{align}
		\mathcal{O}_{\boldsymbol\rho} 
		\coloneqq  
		\displaystyle{\bigotimes_{j\geq 1}} \, \mathcal{O}_{\rho_j},
	\end{align}
	where $ \mathcal{O}_{\rho_j}=
	\{z\in\IC\colon\operatorname{dist}(z,[-1,1])<\rho_j-1\}$.
	\item
	This extension is bounded on $\mathcal{E}_{\boldsymbol\rho}$ according to
	$ \sup_{\z\in \mathcal{E}_{\boldsymbol{\rho}}} \norm{u(\z)}_{X}  \leq C_\varepsilon$.
\end{enumerate}
\end{definition}
The following result addresses  the holomorphic dependence of the solution to 
both problems upon the parametric input. 

\begin{proposition}[Parametric Holomorphy of the Discrete Parameter-to-Solution Map]
\label{prop:parametric_holomorphy}
Let Assumption \ref{assumption:parametric_holomorphy} 
be satisfied with $p \in (0,1)$ and $\boldsymbol{b} \in \ell^p(\IN)$.
\begin{itemize}
	\item[(i)]
	{\bf Helmholtz Impedance Problem}.
	The map 
	$
		\mathcal{S}_{\normalfont\text{Helmholtz}}:
		\fullparamspace
		\rightarrow
		H^1({\text{D}}_0):
		\y
		\mapsto
		\widehat{u}_{h}(\y)
	$
	is $(\boldsymbol{b},p,\varepsilon)$-holomorphic for 
	some $\varepsilon>0$.
	\item[(ii)]
	{\bf Maxwell Lossy Cavity}.
	The map 
	$
		\mathcal{S}_{\normalfont\text{Maxwell}}:
		\fullparamspace
		\rightarrow
		H_0 \left(\Curl;{\text{D}}_0\right):
		\y
		\mapsto
		\widehat{\boldsymbol{E}}_{h}(\y)
	$
	is also $(\boldsymbol{b},p,\varepsilon)$-holomorphic for 
	some $\varepsilon>0$.
\end{itemize}
In either case, $\varepsilon>0$ does not depend on the Galerkin discretization parameter $h>0$.
\end{proposition}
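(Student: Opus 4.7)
The plan is to verify the three conditions of Definition \ref{def:bpe_holomorphy} for each of the two parameter-to-solution maps. The Maxwell case was essentially carried out in \cite{Aylwin2020} for the continuous problem, and the same argument transfers verbatim to the discrete setting since the Nédélec finite element space $V_h$ is $\y$-independent; the proof of (ii) should therefore be obtained by citing and lightly adapting that work. The main effort will be (i), the Helmholtz impedance problem, whose sesquilinear form in \eqref{eq:sesquilinear_Helmholtz} is not coercive and only satisfies a Gårding-type inequality.

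The first step is to establish parametric holomorphy of the coefficient fields $d\boldsymbol{T}^{-\top}(\y)$, $J(\y)$, and $J_{\text{S}}(\y)$. Because $\boldsymbol{T}(\cdot;\y)$ is affine in $\y$ by \eqref{eq: sinusidal mapping}, the Jacobian $d\boldsymbol{T}(\y)=d\boldsymbol{T}_0+\sum_{j\geq 1} y_j \, d\boldsymbol{T}_j$ depends affinely on $\y$, and its complex continuation $d\boldsymbol{T}(\z)$ is well-defined pointwise in $\widehat{\mathbf{x}}\in\text{D}_0$. Using $b_j = \norm{\boldsymbol{T}_j}_{W^{1,\infty}}$, the Neumann-series estimate
\begin{equation}
\bigl\| d\boldsymbol{T}(\z) - d\boldsymbol{T}(\y) \bigr\|_{L^\infty(\text{D}_0)} \leq \sum_{j\geq 1} |z_j - y_j|\, b_j \leq \sum_{j\geq 1}(\rho_j-1) b_j \leq \varepsilon
\end{equation}
holds for any $(\boldsymbol{b},\varepsilon)$-admissible $\boldsymbol{\rho}$ and $\z\in\mathcal{E}_{\boldsymbol{\rho}}$. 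Choosing $\varepsilon>0$ sufficiently small (depending only on $\min_{\y\in\text{U}}\operatorname{ess\,inf} |J(\y)|>0$ from Assumption \ref{assumption:parametric_holomorphy}), one deduces that $d\boldsymbol{T}(\z)$ remains uniformly invertible on $\mathcal{E}_{\boldsymbol{\rho}}$, so that $d\boldsymbol{T}^{-\top}(\z)$, $J(\z)$, and $J_{\text{S}}(\z)$ inherit joint holomorphy in each variable $z_j$ and admit uniform $L^\infty$-bounds on $\mathcal{E}_{\boldsymbol{\rho}}$. Consequently the complex extensions $\widehat{\mathsf{a}}(\cdot,\cdot;\z)$ and $\widehat{\ell}(\cdot;\z)$ are holomorphic in each $z_j$, with coefficients bounded uniformly on $\mathcal{E}_{\boldsymbol{\rho}}$.

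The hard part is controlling the inverse of the Helmholtz operator on this complex neighborhood. The plan is the following. For every fixed $\y\in\text{U}$, Problem \ref{eq: Helmholtz nominal problem} is well-posed on $V_h$ for $h<h_0(\y)$, and by the finite-covering/compactness argument mentioned at the end of Section \ref{sec:discrete_FOM_Helmholtz}, we obtain $h_0>0$ and a constant $C>0$, both independent of $\y$, such that $\sup_{\y\in\text{U}}\|\widehat{u}_h(\y)\|_{H^1(\text{D}_0)}\leq C\,\big(\|\widehat{f}(\y)\|_{L^2(\text{D}_0)}+\|\widehat{g}(\y)\|_{L^2(\Gamma_0)}\big)$. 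Identifying $\widehat{\mathsf{a}}(\cdot,\cdot;\z)$ with a matrix $\mathbb{A}_h(\z)\in\mathbb{C}^{N_h\times N_h}$ in the FE basis, one then writes
\begin{equation}
\mathbb{A}_h(\z)=\mathbb{A}_h(\y)\bigl(\mathbb{I}+\mathbb{A}_h(\y)^{-1}(\mathbb{A}_h(\z)-\mathbb{A}_h(\y))\bigr),
\end{equation}
and estimates the perturbation by $\|\mathbb{A}_h(\z)-\mathbb{A}_h(\y)\|\lesssim \sum_{j\geq 1}(\rho_j-1)b_j\leq\varepsilon$. Taking $\varepsilon$ small enough (depending on the uniform resolvent bound $C$, but not on $h<h_0$) ensures invertibility by a Neumann series, with $\|\mathbb{A}_h(\z)^{-1}\|\leq 2C$ uniformly in $\y\in\text{U}$, $\z\in\mathcal{E}_{\boldsymbol{\rho}}$ and $h<h_0$. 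A final covering of $\text{U}$ by finitely many $\y$-centered complex neighborhoods yields a single $\varepsilon>0$, independent of $h$, on which the solution map is holomorphic and uniformly bounded, establishing the three conditions of Definition \ref{def:bpe_holomorphy}.

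The main obstacle is the last step: the usual Gårding-based arguments do not provide a direct coercive extension to complex parameters. Circumventing this is the reason for the perturbation-of-the-resolvent approach above, which converts the analytic question into a quantitative one about how the uniform resolvent bound of the real Helmholtz problem survives under a small complex perturbation of the shape-dependent coefficients. Once the $h$-independence of $\varepsilon$ is checked, $(\boldsymbol{b},p,\varepsilon)$-holomorphy follows from the assumption $\boldsymbol{b}\in\ell^p(\mathbb{N})$ exactly as in \cite[Section 2]{CCS15}.
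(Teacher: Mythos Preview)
Your proposal is correct and follows essentially the same strategy as the paper's appendix proof: establish holomorphy of the coefficient fields $d\boldsymbol{T}^{-\top}$, $J$, $J_{\text{S}}$ (the paper cites \cite{CSZ18} and \cite[Lemma~2.14]{dolz2023parametric} for these, whereas you derive them directly from the affine structure), deduce holomorphy of the forms $\widehat{\mathsf{a}}$ and $\widehat{\ell}$, and then extend the uniform-in-$\y$ invertibility to complex parameters by a perturbation argument. The paper phrases this last step as stability of the inf-sup constant under small complex perturbations of the sesquilinear form and then invokes \cite[Theorem~4.1]{CCS15}; your explicit Neumann-series resolvent argument is the same computation in different clothing.

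Two points to tighten. First, the norms on $\mathbb{A}_h(\z)$ and $\mathbb{A}_h(\y)^{-1}$ must be the $H^1(\text{D}_0)$-induced operator norms on $V_h\to V_h'$ (not Euclidean matrix norms in the FE basis), since otherwise both the perturbation estimate and the resolvent bound pick up $h$-dependent constants and the claimed $h$-independence of $\varepsilon$ fails; working at the sesquilinear-form level, as the paper does, sidesteps this automatically. Second, the ``final covering of $\text{U}$ by finitely many $\y$-centered complex neighborhoods'' is both unnecessary and delicate in an infinite-dimensional parameter space: since the resolvent bound $C$ is already uniform in $\y\in\text{U}$, for each $\z\in\mathcal{E}_{\boldsymbol\rho}$ you simply take $\y=\y(\z)$ to be the coordinate-wise nearest point in $[-1,1]$ and your Neumann-series estimate gives a single $\varepsilon>0$ that works globally.
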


\begin{proof}
A complete proof of the first statement is included in Appendix \ref{appendxi:proof_holomorphy}.
The second statement has been proved in \cite{Aylwin2020}.
\end{proof}

Let us define the solution manifold for the discrete Helmholtz impedance and
Maxwell lossy cavity problems as
\begin{equation}\label{eq:solution_manifold}
\begin{aligned}
	\mathcal{M}_{\normalfont\text{Helmholtz}}
	&
	\coloneqq
	\left\{
		\widehat{u}_{h}(\y)
		\in H^1(\text{D}_0):
		\y \in \text{U}
	\right\},
	\quad
	\text{and} \\
	\mathcal{M}_{\normalfont\text{Maxwell}}
	&
	\coloneqq
	\left\{
		\widehat{\boldsymbol{E}}_{h}(\y)
		\in H_0 \left(\Curl;{\text{D}}_0\right):
		\y \in \text{U}
	\right\}.
\end{aligned}
\end{equation}

As thoroughly discussed in \cite{CD16}, as a consequence of
Proposition \ref{prop:parametric_holomorphy} we have the following
result concerning the decay of Kolmogorov's width for the solution manifolds
introduced in \eqref{eq:solution_manifold}.

\begin{lemma}[Decay of Kolmogorov's Width, \cite{CD16}]\label{lmm:kolmogorov}
Let Assumption \ref{assumption:parametric_holomorphy} be satisfied
with $\boldsymbol{b} \in \ell^p(\IN)$ and $p\in (0,1)$. Then, it holds
\begin{equation}
\begin{aligned}
	d_L\left(\mathcal{M}_{\normalfont\text{Helmholtz}},H^1(\normalfont\text{D}_0)\right)
	&
	\leq
	C(L+1)^{
	-
	\left(
	\frac{1}{p}-1
	\right)
	}
	\quad
	\text{and} \\
	d_L\left(\mathcal{M}_{\normalfont\text{Maxwell}},H_0 \left(\Curl;{\normalfont\text{D}}_0\right))\right)
	&
	\leq
	C(L+1)^{
	-
	\left(
	\frac{1}{p}-1
	\right)
	},
\end{aligned}
\end{equation}
for some constant $C>0$ independent of $L \in \mathbb{N}$.
\end{lemma}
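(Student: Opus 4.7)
The plan is to reduce the statement to the abstract machinery developed in \cite{CCS15,CD16}, using Proposition \ref{prop:parametric_holomorphy} as the sole structural input about the two PDE problems. In both cases, the parameter-to-solution map takes values in a separable Hilbert space (either $H^1(\text{D}_0)$ or $H_0(\Curl;\text{D}_0)$), is $(\boldsymbol{b},p,\varepsilon)$-holomorphic, and so we may treat them uniformly by abstracting to a generic $(\boldsymbol{b},p,\varepsilon)$-holomorphic map $\fullparamspace \ni \y \mapsto u(\y) \in X$.

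First, expand $u$ in the tensorized Legendre basis $\{L_\nu\}_{\nu\in\mathcal{F}}$ (normalized in $L^2(\fullparamspace,\mathrm{d}\mu)$ with $\mathrm{d}\mu$ the uniform probability measure) indexed by finitely supported multi-indices $\nu\in\mathcal{F}$, obtaining a formal series $u(\y)=\sum_{\nu\in\mathcal{F}} u_\nu L_\nu(\y)$ with coefficients $u_\nu\in X$. Using the holomorphic extension on each admissible poly-ellipse $\mathcal{E}_{\boldsymbol\rho}$ provided by Definition \ref{def:bpe_holomorphy} together with Cauchy-type estimates on ellipses, I would bound $\|u_\nu\|_X \leq C_\varepsilon \prod_{j\geq 1} \rho_j^{-\nu_j}$ for every $(\boldsymbol{b},\varepsilon)$-admissible $\boldsymbol\rho$.

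Second, optimize the choice of $\boldsymbol\rho=\boldsymbol\rho(\nu)$ subject to the admissibility constraint $\sum_{j}(\rho_j-1)b_j\leq\varepsilon$. This is the technical core of \cite{CCS15}: combined with $\boldsymbol{b}\in \ell^p(\mathbb{N})$, the optimization yields $(\|u_\nu\|_X)_{\nu\in\mathcal{F}}\in \ell^p(\mathcal{F})$. Let $\Lambda_L\subset\mathcal{F}$ collect the indices of the $L$ largest $\|u_\nu\|_X$. Stechkin's lemma then gives
\begin{equation}
\sup_{\y\in\fullparamspace}\Big\|u(\y)-\sum_{\nu\in\Lambda_L}u_\nu L_\nu(\y)\Big\|_X \;\leq\; \sum_{\nu\notin\Lambda_L}\|u_\nu\|_X \;\leq\; C\,(L+1)^{-(1/p-1)},
\end{equation}
since Legendre polynomials are bounded by $1$ on $[-1,1]$ in the standard normalization (absorbing the normalizing factors into $C$ as in \cite{CD16}).

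Third, set $X_L\coloneqq\mathrm{span}\{u_\nu:\nu\in\Lambda_L\}\subset X$, a subspace of dimension at most $L$. The uniform estimate above immediately yields $d_L(\mathcal{M},X)\leq C(L+1)^{-(1/p-1)}$ for $\mathcal{M}=\{u(\y):\y\in\fullparamspace\}$. Specializing $u$ to $\mathcal{S}_{\normalfont\text{Helmholtz}}$ resp.\ $\mathcal{S}_{\normalfont\text{Maxwell}}$ and $X$ to $H^1(\text{D}_0)$ resp.\ $H_0(\Curl;\text{D}_0)$, and noting that $\mathcal{M}_{\normalfont\text{Helmholtz}}$ and $\mathcal{M}_{\normalfont\text{Maxwell}}$ agree with these manifolds, yields both claims. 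The fact that $\varepsilon$ (and hence the constants implicit in $C_\varepsilon$) is independent of $h$ by the last clause of Proposition \ref{prop:parametric_holomorphy} ensures that the bound itself is $h$-independent.

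The main obstacle I foresee is not conceptual but bookkeeping: tracking the Stechkin-type summability argument carefully enough to guarantee that the constant $C$ depends only on $p$, $\boldsymbol{b}$ and $\varepsilon$ but not on $L$ nor on $h$. This is exactly what \cite{CD16} provides, so in practice the proof reduces to checking that the hypotheses of their abstract theorem are met via Proposition \ref{prop:parametric_holomorphy} and then quoting their conclusion.
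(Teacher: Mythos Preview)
Your proposal is correct and aligns with the paper's approach: the paper does not give its own proof but simply invokes \cite{CD16} as a consequence of Proposition \ref{prop:parametric_holomorphy}, and what you have written is precisely a sketch of the standard Legendre-expansion/Stechkin argument from \cite{CCS15,CD16} specialized via that proposition. The only minor caveat is the normalization of the Legendre polynomials (the $L^2$-normalized ones have sup-norm $\sqrt{2\nu_j+1}$ rather than $1$), but you already flagged this and the absorption into $C$ is indeed handled in \cite{CD16}.
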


\subsection{Convergence of the Galerkin-POD RB Method}
\label{sec:convergence_rb_method}
In Section \ref{eq:parametric_holomorphy} we established the holomorphic dependence
of the solution to both the Helmholtz interior impedance and Maxwell lossy cavity problems
upon the parametric variables describing the problems' shape deformations.
Among the consequences of this property, and relevant for subsequent developments, we have 
the approximability of the solution manifolds defined in \eqref{eq:solution_manifold} through
finite dimensional linear subspaces. This property is reflected in terms of the dimension-independent decay of Kolmogorov's width as described in Lemma \ref{lmm:kolmogorov}.

Using the properties of the solution manifold described in Section \ref{eq:parametric_holomorphy},
and following the presentation of \cite{hesthaven2016certified,Quarteroni_2016}, we are interested in
establishing dimension-independent convergence rates of the Galerkin-POD-RB method.

To this end, we observe that the $(\boldsymbol{b},p,\varepsilon)$-holomorphy
of the parameter-to-solution map implies $u_h\in L^2(\truncparamspace;V_h)$, thus
$u_h$ is a Hilbert-Schmidt kernel and $\mathsf{T}: L^2(\truncparamspace) \rightarrow V_h$ defined as
\begin{equation}
	\mathsf{T} g 
	=
	\int\limits_{\truncparamspace}
	u_h(\y) g(\y) 
	\,
	\text{d} \y,
	\quad
	g \in L^2(\truncparamspace),
\end{equation}
is a compact Hilbert-Schmidt operator with adjoint $\mathsf{T}^\dagger:V_h \rightarrow L^2(\truncparamspace)$
admitting for each $\y \in \fullparamspace$ the following expression
$
	\left(
		\mathsf{T}^\dagger
		v_h
	\right)(\y)
	=
	\dotp{u_h(\y)}{v_h}_V
	\quad
	\forall 
	v_h \in V_h.
$
As a consequence, the operators
$
	\mathsf{K}
	\coloneqq
	\mathsf{T}
	\mathsf{T}^\dagger:
	V_h
	\rightarrow
	V_h
$
and
$
	\mathsf{C}
	\coloneqq
	\mathsf{T}^\dagger
	\mathsf{T}:
	L^2(\truncparamspace)
	\rightarrow
	L^2(\truncparamspace)
$
are Hermitian, non-negative, and compact.
The latter operator can be represented by the
matrix
\begin{equation}
	{\bf K}_h
	=
	\int\limits_{\normalfont\truncparamspace}
	{\bf u}_h(\y)
	{\bf u}_h(\y)^\dagger
	\,
	\text{d}
	\y
	\in \mathbb{C}^{N_h \times N_h}.
\end{equation} 
Let $\sigma^2_{1}\geq\cdots\geq\sigma^2_{r}>0$ be the eigenvalues
of ${\bf K}_h$, with $r = \text{rank}({\bf K}_h)$, associated to the eigenvectors
$\boldsymbol\zeta_1,\dots, \boldsymbol\zeta_r$,
respectively, i.e.~
$
{\bf K}_h
\boldsymbol{\zeta}_i
=
\sigma^2_{i}
\boldsymbol{\zeta}_i,
\quad
i=1,\dots,r
$. Let us set for $i=1,\dots,r$
\begin{equation}
\zeta_{i,h}
=
\sum_{j=1}^{N_h}
\left(
\boldsymbol\zeta_{i}
\right)_j
\varphi_{j}
\in 
V_h
\quad
\text{and}
\quad
{V}^{(\normalfont\text{rb})}_L
=
\normalfont\text{span}
\left\{
\zeta_{1,h}
,
\dots
,
\zeta_{L,h}
\right\},
\end{equation}
Then, according to \cite[Proposition 6.3]{Quarteroni_2016}, it holds
\begin{equation}\label{eq:min_time_domain_2}
	V^{\text{(rb)}}_L
	=
	\argmin_{
	\substack{
		V_L \subset V_h
		\\
		\text{dim}(V_L)\leq L	
	}
	}	
	\norm{
		u_h
		-
		\mathsf{P}_{V_L}
		u_h
	}^2_{L^2(\truncparamspace;V)}.
\end{equation}

For each $\y \in \normalfont\truncparamspace$, 
we are interested in finding $u^{(\text{rb})}_L(\y) \in {V}^{\text{(rb)}}_L$
such that
\begin{equation}\label{eq:solution_rb}
	\mathsf{a}
	\left(
		u^{(\normalfont\text{rb})}_L(\y),v^{(\text{rb})}_L;\y
	\right)
	=
	\ell\left(v^{(\text{rb})}_L;\y\right),
	\quad
	\forall v^{(\text{rb})}_L \in {V}^{(\normalfont\text{rb})}_L.
\end{equation}
In the following, we refer to $\widehat{u}^{(\normalfont\text{rb})}_L$
and $\widehat{\bE}^{(\normalfont\text{rb})}_L$
as the solution of \eqref{eq:solution_rb} when considering the 
reduced counterparts of the discrete Helmholtz impedance and 
the Maxwell cavity problems as described in Subsection \ref{sec:discrete_FOM_Helmholtz}
and \ref{sec:discrete_FOM_Maxwell}, respectively.

\begin{theorem}[Convergence of the Galerkin-POD RB Method]
Let Assumption \ref{assumption:parametric_holomorphy} 
be satisfied with $p \in (0,1)$ and $\boldsymbol{b} \in \ell^p(\IN)$.
\begin{itemize}
	\item[(i)]
	{\bf Helmholtz Impedance Problem.}
	There exists $L_0 \in \IN$ such that there exists $C>0$ such that for each $L\geq L_0$
	and any $J\in \mathbb{N}$ it holds
	\begin{equation}
		\norm{
			\widehat{u}_h
			-
			\widehat{u}^{(\normalfont\text{rb})}_L
		}_{L^2\left(\truncparamspace;H^1\left({\normalfont\text{D}}_0\right)\right)}
		\leq
		C(L+1)^{
		-	
		\left(
		\frac{1}{p}-1
		\right)
		}.
	\end{equation}
	\item[(ii)]
	{\bf Maxwell Lossy Cavity.} There exists $C>0$ and $L_0\in \mathbb{N}$
	such that for each $L \in \IN$ and any $J\in \mathbb{N}$
	\begin{equation}
		\norm{
			\widehat{\bE}_h
			-
			\widehat{\bE}^{(\normalfont\text{rb})}_L
		}_{L^2\left(\truncparamspace;H_0\left(\Curl;{\normalfont\text{D}_0}\right)\right)}
		\leq
		C(L+1)^{
		-	
		\left(
		\frac{1}{p}-1
		\right)
		}.
	\end{equation}
\end{itemize}
\end{theorem}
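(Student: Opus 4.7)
The plan is to follow the classical pattern for projection-based RB convergence: first obtain pointwise quasi-optimality in $\y$, then integrate in $\y$, then invoke the $L^2$-optimality of the POD subspace to reduce to the Kolmogorov width, and finally apply Lemma \ref{lmm:kolmogorov}. Throughout I will use $V$ generically for $H^1(\text{D}_0)$ or $H_0(\Curl;\text{D}_0)$ and $\mathcal{M}$ for the corresponding solution manifold in \eqref{eq:solution_manifold}. A convenient piece of infrastructure is the variational characterization \eqref{eq:min_time_domain_2}, which says the POD space $V^{(\text{rb})}_L$ minimizes the mean-squared projection error over all $L$-dimensional subspaces of $V_h$.

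\textbf{Step 1: pointwise quasi-optimality.} For the Maxwell lossy cavity, \eqref{eq:proof_coercivity_Maxwell} gives a $\y$-uniform coercivity constant $\min\{\mu_b,\Lambda_b\}$, and together with the uniform continuity of $\widehat{\mathsf a}(\cdot,\cdot;\y)$ C\'ea's lemma yields, for every $\y\in\truncparamspace$,
\begin{equation}
\|\widehat{\bE}_h(\y)-\widehat{\bE}^{(\text{rb})}_L(\y)\|_V
\leq C_{\text{qo}}\inf_{v\in V^{(\text{rb})}_L}\|\widehat{\bE}_h(\y)-v\|_V,
\end{equation}
with $C_{\text{qo}}$ independent of $\y$ and $L$. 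For the Helmholtz impedance problem, only a G\aa rding inequality is available, so genuine quasi-optimality requires a Schatz-type duality argument: one shows that whenever $V^{(\text{rb})}_L$ is fine enough in the sense of best-approximation, the discrete inf-sup constant on $V^{(\text{rb})}_L\times V^{(\text{rb})}_L$ is bounded below uniformly in $\y$. Since Lemma \ref{lmm:kolmogorov} guarantees the best-approximation error from the (eventually) chosen $V^{(\text{rb})}_L$ tends to zero uniformly in $\y$, there exists $L_0\in\mathbb N$ such that for every $L\geq L_0$ the Schatz argument produces the same bound as above for the Helmholtz solutions, with a $\y$-uniform constant. This uniformity in $\y$ and in the high-fidelity parameter $h$ is the main technical obstacle, and it is precisely the reason for the threshold $L_0$ in the Helmholtz case.

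\textbf{Step 2: integration in $\y$ and POD optimality.} Squaring the pointwise estimate and integrating over $\truncparamspace$ gives
\begin{equation}
\|\widehat{u}_h-\widehat{u}^{(\text{rb})}_L\|^2_{L^2(\truncparamspace;V)}
\leq C_{\text{qo}}^2\int_{\truncparamspace}\inf_{v\in V^{(\text{rb})}_L}\|\widehat{u}_h(\y)-v\|^2_V\,d\y
= C_{\text{qo}}^2\|\widehat{u}_h-\mathsf P_{V^{(\text{rb})}_L}\widehat{u}_h\|^2_{L^2(\truncparamspace;V)},
\end{equation}
with the obvious substitution in the Maxwell case. Now \eqref{eq:min_time_domain_2} asserts that $V^{(\text{rb})}_L$ is a minimizer over all $L$-dimensional subspaces of $V_h$, so for any other $L$-dimensional $W_L\subset V_h$ the right-hand side is bounded by $C_{\text{qo}}^2\|\widehat{u}_h-\mathsf P_{W_L}\widehat{u}_h\|^2_{L^2(\truncparamspace;V)}$.

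\textbf{Step 3: reduction to Kolmogorov width.} Let $W^{\star}_L\subset V$ be an $L$-dimensional subspace that (nearly) realizes $d_L(\mathcal{M},V)$. By a standard density/compactness argument one may assume $W^{\star}_L\subset V_h$ (or absorb the additional error in the $h$-dependent discretization, which is immaterial for the $L$-asymptotics since the Kolmogorov bound is $h$-independent by Proposition \ref{prop:parametric_holomorphy}). Then for every $\y\in\truncparamspace$, using $\widehat{u}_h(\y)\in\mathcal{M}$,
\begin{equation}
\|\widehat{u}_h(\y)-\mathsf P_{W^{\star}_L}\widehat{u}_h(\y)\|_V\leq d_L(\mathcal{M},V).
\end{equation}
Integrating, the $L^2$-projection error onto $W^{\star}_L$ is bounded by $|\truncparamspace|^{1/2}\,d_L(\mathcal{M},V)$. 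After normalising the measure on $\truncparamspace$ (the natural convention in this UQ setting), the prefactor equals one and is therefore independent of $J$.

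\textbf{Step 4: conclusion.} Chaining Steps 1--3 and inserting the decay rate from Lemma \ref{lmm:kolmogorov} yields, in both the Helmholtz case (for $L\geq L_0$) and the Maxwell case (for all $L$, since the coercivity argument needs no threshold; the statement nonetheless allows an $L_0$ for uniformity of presentation),
\begin{equation}
\|\widehat{u}_h-\widehat{u}^{(\text{rb})}_L\|_{L^2(\truncparamspace;V)}
\leq C_{\text{qo}}\,d_L(\mathcal{M},V)
\leq C(L+1)^{-(1/p-1)},
\end{equation}
with $C$ independent of $L$, of $J$, and of the FE parameter $h$. The Helmholtz branch delivers precisely the stated bound for $L\geq L_0$, and the Maxwell branch delivers it for all $L\in\mathbb N$, completing the proof. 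As noted above, the one genuinely delicate point is proving that the Schatz argument can be applied uniformly in $\y\in\truncparamspace$ for the Helmholtz case; everything else is assembly of tools already established in Lemma \ref{lmm:kolmogorov}, Proposition \ref{prop:parametric_holomorphy} and the POD optimality \eqref{eq:min_time_domain_2}.
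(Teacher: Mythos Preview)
Your proposal is correct and follows essentially the same approach as the paper's proof: pointwise quasi-optimality via C\'ea's lemma (with the Schatz-type threshold $L_0$ in the Helmholtz case), then the POD $L^2$-optimality \eqref{eq:min_time_domain_2}, then Lemma \ref{lmm:kolmogorov}. The paper's version is considerably more terse and does not spell out Steps 2--3 or the passage from subspaces of $V$ to subspaces of $V_h$; your more explicit treatment of these points, and your remark that no threshold is needed for Maxwell due to coercivity, match the paper's intent exactly.
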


\begin{proof}
Firstly, we consider the Helmholtz interior impedance problem.
Similarly as for the discrete Helmholtz full order model described in
\ref{sec:discrete_FOM_Helmholtz}, the application of Cea's Lemma
(which in principle is valid for any finite dimensional subspace) yields
for each $\y \in \text{U}$ and $L\geq L_0$ 
\begin{equation}
	\norm{
		\widehat{u}_h(\y)
		-
		\hat{u}^{(\normalfont\text{rb})}_L(\y)
	}_{H^1\left({\normalfont\text{D}}_0\right)}
	\leq
	C
	\inf_{v_L \in{V}^{(\normalfont\text{rb})}_L.}
	\norm{
		\widehat{u}_h(\y)
		-
		v_L
	}_{H^1\left({\normalfont\text{D}}_0\right)},
\end{equation}
where $C>0$ is a uniform constant and a minimal level of
refinement $L_0$ of the reduced space is required. The final assertion
follows from Lemma \ref{lmm:kolmogorov} and \eqref{eq:min_time_domain_2}.
The assertion for the Maxwell cavity problem follows from the exact same arguments, however
no base level of refinement of the reduced space is required due to the ellipticity of the
corresponding sesquilinear form, i.e.~\eqref{eq:proof_coercivity_Maxwell}.
\end{proof}

\subsection{Snapshot Selection}
The results presented rely in Section \ref{sec:convergence_rb_method} 
on the assumption that the reduced basis 
$V^{\text{(rb)}}_L$ can be computed as in \eqref{eq:min_time_domain_2},
which in turn implies the exact computation of an integral over the parameter
space $\truncparamspace$. Even after considering only the first $J$ parametric dimensions,
this integral needs to be approximated by a suitable
quadrature rule in $\text{U}^{(J)} \coloneqq [-1,1]^J$, as discussed, e.g.,~in \cite[Section 6.5]{Quarteroni_2016}.
The quadrature points define the set $\Xi_{N_s}$ introduced in Section \ref{sec:POD}
for the computation of the snapshots. The effect of the truncation in the parametric dimension
yields as error term decaying as $J^{-\left(\frac{1}{p}-1\right)}$.

Consider the general framework introduced in Section \ref{sec:discrete_full_order_model}.
For a finite dimensional subspace $V_L  = \text{span}\{v_1,\dots,v_L\} \subset V_h$ we set
\begin{equation}
	\mathscr{E}(V_L)
	=
	\norm{
		u_h
		-
		\mathsf{P}_{V_L}
		u_h
	}^2_{L^2\left(\truncparamspace;V\right)}
	\quad
	\text{and}
	\quad
	\mathscr{E}^{(N_s)}(V_L)
	=
	\frac{1}{N_s}
	\sum_{i=1}^{N_s}
	\norm{
		u_h(\y^{(i)})
		-
		\mathsf{P}_{V_L}
		u_h(\y^{(i)})
	}^2_{V},
\end{equation}
where the latter is an $N_s$-points, equal weights,
$J$-dimensional quadrature rule
with quadrature points $\left\{\y^{(1)}, \ldots, \y^{(N_s)}\right\} \subset\truncparamspace$ which approximates
$\mathscr{E}(V_R)$.
As in \cite[Section 6.5]{Quarteroni_2016}, we decompose the error as follows
\begin{equation}\label{eq:quadrature_error}
	\mathscr{E}(V_L)
	\leq
	\underbrace{
	\snorm{
		\mathscr{E}(V_L)
		-
		\mathscr{E}^{(N_s)}(V_L)
	}
	}_{\text{Quadrature Error}}
	+
	\mathscr{E}^{(N_s)}(V_L)
\end{equation}
Furthermore, as we are working in a finite dimensal subspace $V_h$, one has
\begin{equation}
	\mathscr{E}^{(N_s)}(V_L)
	\cong
	\widetilde{\mathscr{E}}^{(N_s)}
	\left(\mathbb{V}_L\right),
\end{equation}
where the hidden constants depend on $V_h$, $\mathbb{V}_L = ({\bf v}_1,\dots,{\bf v}_L) \in \mathbb{C}^{N_h \times L}$,
$v_j$ and ${\bf v}_j$ are connected as described in \eqref{eq:rel_dof_FE},
and $\widetilde{\mathscr{E}}^{(N_s)}$ is as in \eqref{eq:discrete_E}.

The quadrature error in \eqref{eq:quadrature_error} depends on the
problem's parametric dimension. 
The parametric domain deformations considered in this work, as
described in Section \ref{sec: A family of domain mappings},
allowing high-dimensional parametric inputs controlling the domain's
shape deformations.  

In the following, we consider low discrepancy sequences as
quadrature rules. For a specific definition we refer to \cite{caflisch1998monte}.
Examples of low-discrepancy sequences
include those of Sobol' \cite{sobol1967distribution}, Halton \cite{halton1960efficiency}, and Owen \cite{owen1997monte}.
In \cite{lye2020deep,mishra2021enhancing}, low-discrepancy sequences have been considered
for the generation of training data in the approximation of quatities of interested by means of NNs.
The exact same principle applies for the approximation of the quadrature error in \eqref{eq:quadrature_error}.
Indeed, as in \cite[Lemma 3.4]{mishra2021enhancing} one can show that
\begin{equation}
	\snorm{
		\mathscr{E}(V_L)
		-
		\mathscr{E}^{(N_s)}(V_L)
	}
	\leq
	C
	\text{V}_{\text{HK}}
	\frac{(\log N_s)^J}{N_s},
\end{equation}
for a constant $C>0$, where $\text{V}_{\text{HK}}$ corresponds to the Hardy-Krause variation
of the map 
$\truncparamspace 
	\ni 
	\y \mapsto \norm{u_h(\y)-\mathsf{P}_{V_L} u_h(\y)}^2_{V} \in 	\mathbb{R},
$
. Therefore, we may conclude that for
$\mathbb{V}^{\text{(rb)}}_L$ as in \eqref{eq:reduced_space_L} and with 
${V}^{\text{(rb)}}_L$ being the representation of this basis in the FE space $V_h$,
together with \ref{eq:quadrature_error} we have
\begin{equation}
	\mathscr{E}({V}^{\text{(rb)}}_L)
	\lesssim
	\text{V}_{\text{HK}}
	\frac{(\log N_s)^J}{N_s}
	+
	\sum_{j=L+1}^R\sigma^2_j,
\end{equation}
where $\sigma_j>0$ are the singular values of the snapshot matrix $\mathbb{S}$
defined in \eqref{eq:snapshot_matrix}.

\subsection{Centered RB-POD Implementation}
\label{sec:centered_rb_construction}
As in \cite{Chen_2021}, we consider a construction of the reduced basis
in the following referred to as the \emph{centered RB-POD}.
Provided snapshots $\mathbf{s}_1,\dots,\mathbf{s}_{N_s} \in  \mathbb{C}^{N_s}$,
we define the mean of the snapshots as
\begin{equation}
	\overline{\mathbf{u}}
	\coloneqq
	\frac{1}{N_s} \sum_{n = 1}^{N_s} \mathbf{u}_h\left(\y^{(n)}\right) \in \C^{N_h}.
\end{equation}
Let $\mathbb{S}$ be the snapshot matrix as in \eqref{eq:snapshot_matrix},
set $\overline{\mathbb{S}} = (\overline{\mathbf{u}}, \dots,\overline{\mathbf{u}}) \in \mathbb{C}^{N_h \times N_s}$,
and consider the SVD of $\mathbb{S} - \overline{\mathbb{S}}$
\begin{align}
	\mathbb{S} - \overline{\mathbb{S}}
	= 
	\overline{\mathbb{W}}
	\begin{pmatrix}
        		\overline{\mathbb{D}} & 0 \\
        		0 & 0
    	\end{pmatrix} 
	\overline{\mathbb{Z}}^\dagger 
	= 
	\overline{\mathbb{W}}\,\overline{\Sigma} \, \overline{\mathbb{Z}}^\dagger.
\end{align}
with $R = \text{rank}(\mathbb{S} - \overline{\mathbb{S}})$, $\overline{\mathbb{D}} \in \IR^{R \times R}$
a diagonal matrix containing the singular values of $\mathbb{S} - \overline{\mathbb{S}}$, and 
\begin{equation}
	\overline{\mathbb{W}} = \left(\overline{\mathbf{w}}_1, \ldots, \overline{\mathbf{w}}_{N_s}\right)
	\in 
	\C^{N_h \times N_h}
	\quad
	\text{and}
	\quad
	\overline{\mathbb{Z}} = \left(\overline{\mathbf{z}}_1, \ldots ,\overline{\mathbf{z}}_{N_s}\right) 
	\in 
	\C^{N_s \times N_s}.
\end{equation}
We set
\begin{align}
	\overline{\V}^{\text{(rb)}}_L 
    	= 
	\left(
		\overline{\mathbf{w}}_1, \ldots, \overline{\mathbf{w}}_L
	\right)
	\in 
	\mathbb{C}^{N_h \times L}.
\end{align}
We look for a reduced solution to \eqref{eq: algebraic reduced basis formulation}
of the form $\overline{\V}^{\text{(rb)}}_L \overline{\mathbf{u}}^{\text{(rb)}}_L(\y) +\overline{\mathbf{u}}$,
thus yielding the following problem: For each $\y \in \truncparamspace$, we seek $\overline{\mathbf{u}}^{\text{(rb)}}_L \in \C^L$ such that
$$
	\overline{\V}^{\text{(rb)}\dagger}_L
	\mathbf{G}_{h}
	\left(	
		\overline{\V}^{\text{(rb)}}_L\overline{\mathbf{u}}^{\text{(rb)}}_L(\y)
		+
		\overline{\mathbf{u}}; 
		\y
	\right) 
	= 
	\boldsymbol{0} \in \C^{L},
$$
which in the linear case amounts to adapting the RHS of the system, as follows
\begin{equation}
	\overline{\V}^{\text{(rb)}\dagger}_L \mathbf{G}_{h}(\y)\overline{\V}^{\text{(rb)}}_L \overline{\mathbf{u}}^{\text{(rb)}}_L(\y) 
	=
	-
	\overline{\V}^{\text{(rb)}\dagger}_L \mathbf{G}_{h}(\y) 
	\overline{\mathbf{u}}.
\end{equation}

\section{Galerkin POD - Neural Network}
\label{sec: neural network POD}
In this section, we introduce the Galerkin POD Neural Network (POD-NN) as proposed in 
\cite{Hesthaven2018} and propose minor modifications to accommodate the complex-valued
nature of the solutions.
We further formulate the learning problem centered around the mean,
which facilitates the learning task for the NN.

\subsection{Neural Networks}
\label{sec: artificial neural networks}
In this work, we consider multi-layer perceptron architectures consisting of  $D \in \IN$ layers, with layer width  $\ell_0,\dots, \ell_D \in \IN$.  The activation function $\sigma:\IR \rightarrow\IR$ may be chosen as any nonlinear function, we restrict our discussion to the hyperbolic tangent defined as
\begin{align}
	\sigma(x)
	=
	\text{tanh}(x)
	=
	\frac{\exp(x)-\exp(-x)}{\exp(x)+\exp(-x)},
	\quad
	x\in \mathbb{R}.
\end{align}
Given weights and biases $\boldsymbol{\theta} \coloneqq({\bf W}_k,{\bf b}_k)_{k=1}^{D}$,
${\bf W}_k\in\IR^{\ell_k\times\ell_{k-1}}$, ${\bf b}_k\in\IR^{\ell _k}$, we
define the affine transformation
 ${\bf A}_k: \IR^{\ell_{k-1}}\rightarrow \IR^{\ell_k}: {\bf x} \mapsto {\bf W}_k {\bf x}+{\bf b}_k$
for $k\in\{1,\ldots,D\}$. We may then define a neural network 
with activation function $\sigma$ as a map
$\Psi^{\mathcal{N\!N}}_{\boldsymbol{\theta}}: \IR^{\ell_0}\rightarrow \IR^{\ell_D}$ with
\begin{align}\label{eq:ann_def}
	\Psi^{\mathcal{N\!N}}_{\boldsymbol{\theta}}({\bf x})
	\coloneqq
	\begin{cases}
	{\bf A}_1({\bf x}), & D=1, \\
	\left(
		{\bf A}_L
		\circ
		\sigma
		\circ
		{\bf A}_{L-1}
		\circ
		\sigma
		\cdots
		\circ
		\sigma
		\circ
		{\bf A}_1
	\right)({\bf x}),
	& D\geq2,
	\end{cases}
\end{align}
where the activation function $\sigma:\IR\rightarrow \IR$
is applied component-wise to vector-valued inputs.
We define the depth and the width
of an NN as
\[
\normalfont\text{width}\left(\Psi^{\mathcal{N\!N}}_{\boldsymbol{\theta}}\right)=\max\{\ell_0,\ldots, \ell_D\}
\quad
\text{and}
\quad
\normalfont\text{depth}\left(\Psi^{\mathcal{N\!N}}_{\boldsymbol{\theta}}\right)=D.
\]
We denote by $\mathcal{N\!N}_{D,H,\ell_0,\ell_D}$
the set of all NNs $\Psi^{\mathcal{N\!N}}_{\boldsymbol{\theta}}({\bf x}):\mathbb{R}^{\ell_0}\rightarrow \mathbb{R}^{\ell_D}$ 
with input dimension $\ell_0$, output dimension $\ell_D$, a width of at most $H$, and a depth of at most $D$ layers.

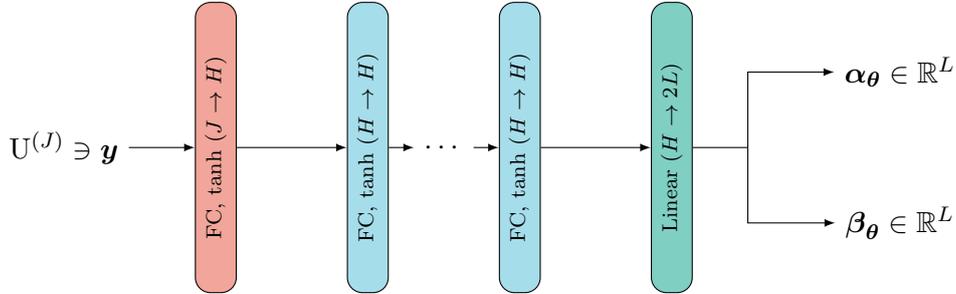
\begin{figure}[htp]
\centering
\def\nhidden{4}
\def\spacing{2.0}

\begin{tikzpicture}[
    neuron/.style={draw, circle, minimum size=1.2em},
    layer/.style={draw, rectangle, minimum width=10em, minimum height=1em, rounded corners=2mm, font=\scriptsize},
    rotateText/.style={rotate=90}
]
\node[minimum width=2em, align=center] (input) at (-\spacing, 0) {$\mathrm{U}^{(J)} \ni \y$};
\node[layer, rotate=90, fill=myred!50] (embedding) at (0,0) {FC, $\tanh$ ($J \to H$)};
\node[layer, rotate=90, fill=myblue!50] (hidden1) at (\spacing,0) {FC, $\tanh$ ($H \to H$)};

\node[] (dummy) at (\spacing*1.5,0) {$\cdots$};

\node[layer, rotate=90, fill=myblue!50] (hidden2) at (\spacing*2,0) {FC, $\tanh$ ($H \to H$)};
\node[layer, rotate=90, fill=mygreen!50] (linear out) at (\spacing*3, 0) {Linear ($H \to 2L$)};

\node[] (dummy out) at (\spacing*3.5, 0) {};

\node[minimum width=2em] (real output) at (\spacing*4.5, 1) {$\realout_{\boldsymbol{\theta}} \in \R^L$};

\node[minimum width=2em] (imag output) at (\spacing*4.5, -1) {
$\imagout_{\boldsymbol{\theta}} \in \R^L$
};
\draw[->] (input) -- (embedding);
\draw[->] (embedding) -- (hidden1);
\draw[->] (hidden1) -- (dummy);
\draw[->] (dummy) -- (hidden2);
\draw[->] (hidden2) -- (linear out);
\draw[] (linear out) -- (dummy out.center);
\draw[->] (dummy out.center) |- (real output.west);
\draw[->] (dummy out.center) |- (imag output.west);

\end{tikzpicture}
\caption{NN architecture for the approximation of the map $\boldsymbol{\pi}^{\text{(rb)}}_{L,\mathbb{R}}: \truncparamspace \to \R^{2L}$
as in \eqref{eq:ann_parametric_real}.
The NN accepts as input $J$ values accounting for the components of the parametric input $\y = (y_1,\dots,y_J) \in  \truncparamspace$, whereas
there are $2L$ outputs representing both the real and imaginary parts of the reduced coefficients. The input (red) and hidden layers (blue) are fully connected (FC) with hyperbolic tangent ($\tanh$) activation functions. }\label{fig:ANN_complex_decomp}
\end{figure}

\subsection{Formulation of the Learning Problem}
Following the centered RB construction described in Section \ref{sec:centered_rb_construction},
we would like to approximate the parametric map
\begin{align}\label{eq: complex parameter to reduced coeff map}
	\boldsymbol{\pi}^{\text{(rb)}}_L
	: \truncparamspace  \to \C^L:
	\y \mapsto 
	\V^{\text{(rb)}\dagger}_L
	\left(
		{\mathbf{u}}_h(\y) -\overline{\mathbf{u}}
	\right),
\end{align} 
by an NN, where $L\in \IN$ is the dimension of the reduced space $\V^{\text{(rb)}}_L$.

The map introduced in \eqref{eq: complex parameter to reduced coeff map}
has an output that is complex-valued, as the reduced coefficients are complex-valued 
themselves. However, this does not fit the NN definition stated in Section
\ref{sec: artificial neural networks}.
Consequently, we proceed to formulate an
equivalent real-valued learning problem.

Then, instead of approximating the map $\boldsymbol{\pi}^{\text{(rb)}}_L$
as in \eqref{eq: complex parameter to reduced coeff map}, we consider the map
\begin{align}\label{eq:ann_parametric_real}
    \boldsymbol{\pi}^{\text{(rb)}}_{L,\mathbb{R}}:
    \truncparamspace
 \to \R^{2L}:
    \y \mapsto
    \begin{pmatrix}
        \realout(\y) \\
        \imagout(\y)
    \end{pmatrix}
    \coloneqq
    \begin{pmatrix}
	\Re\left\{ 	\V^{\text{(rb)}\dagger}_L
	\left(
		{\mathbf{u}}_h(\y) -\overline{\mathbf{u}}
	\right)\right \} \\
	\Im\left\{ 	\V^{\text{(rb)}\dagger}_L
	\left(
		{\mathbf{u}}_h(\y) -\overline{\mathbf{u}}
	\right)\right \}
    \end{pmatrix},
\end{align}
$\realout(\y),  \imagout(\y) \in \mathbb{R}^L$ for each $\y \in \text{U}^{(L)}$, 
which returns separately the real and imaginary parts of the reduced coefficients
in a real-valued vector of size $2L$.

Given a data set consisting of $N_s$ training inputs $\y^{(i)} \in \text{U}^{(J)}, i = 1, \ldots, N_s$
and the corresponding high-fidelity snapshots ${\bf u}_h\left(\y^{(i)}\right), i = 1, \ldots, N_s$,
we can train an NN $ \boldsymbol{\pi}^{\text{(rb)}}_{\boldsymbol{\theta}} \in \mathcal{N\!N}_{H,D,J,2L}$ 
(as in Section \ref{sec: artificial neural networks}),
i.e.~with $J$ inputs (one for each component of the parametric input $\y \in \fullparamspace$, $2L$ outputs
accounting for the $L$ complex reduced coefficients, and depth and width $D$ and $H$, respectively, 
on the set of training input-output pairs
$
	\mathcal{P}_{\text{Train}} 
	=
	\left\{
		\left(\y^{(i)}, \boldsymbol{\pi}^{\text{(rb)}}_{L,\mathbb{R}}\left(\y^{(i)}\right)\right)
	\right\}_{1 \leq i \leq N_s}.
$
Figure \ref{fig:ANN_complex_decomp} portrays the previously described architecture. The 
first $L$ outputs emulate the real part of the reduced coefficients, whereas the second $L$
outputs account for the imaginary one. 

The mean square error (MSE) is a natural candidate for a loss function.
More precisely, let $\boldsymbol \theta$ denote the vector gathering all weights and biases of the NN
$\boldsymbol{\pi}^{\text{(rb)}}_{\boldsymbol{\theta}}$.
Then, the MSE loss is given by

\begin{equation}\label{eq: MSE_loss_nn}
\begin{aligned}
	L_{\text{MSE}}(\boldsymbol \theta) 
	&
	\coloneqq 
	\frac{1}{N_s}\sum_{i = 1}^{N_s}
	\norm{
		 \boldsymbol{\pi}^{\text{(rb)}}_{L,\mathbb{R}}\left(\y^{(i)}\right) 
		 - 
		\boldsymbol{\pi}^{\text{(rb)}}_{\boldsymbol{\theta}}\left(\y^{(i)}\right)
	}^2_2
	\\
	&
	=
	\frac{1}{N_s}\sum_{i = 1}^{N_s}
	\sum_{l = 1}^{L}
	\snorm{
		\alpha_{l}\left(\y^{(i)}\right)
		-
		\alpha_{l,\boldsymbol\theta}\left(\y^{(i)}\right)
	}^2
	+
	\snorm{
		\beta_{l}\left(\y^{(i)}\right)
		-
		\beta_{l,\boldsymbol\theta}\left(\y^{(i)}\right)
	}^2,
\end{aligned}
\end{equation}
where the outputs of the NN $\boldsymbol{\pi}^{\text{(rb)}}_{\boldsymbol{\theta}}$ are organized
as follows (cp.~Figure \ref{fig:ANN_complex_decomp})

\begin{equation}
	\boldsymbol{\pi}^{\text{(rb)}}_{\boldsymbol{\theta}}(\y)
	=
	\begin{pmatrix}
        \realout_{\boldsymbol{\theta}}(\y)^\top,  \imagout_{\boldsymbol{\theta}}(\y)^\top
	\end{pmatrix}^\top,
	\quad
	\y \in \truncparamspace.
\end{equation}
Let $\boldsymbol{\theta}^\star$ be such that 
\begin{equation}
	\boldsymbol{\theta}^\star
	\in
	\displaystyle\argmin_{\boldsymbol\theta} L_{\text{MSE}}(\boldsymbol \theta).
\end{equation}
Then, the reduced basis solution lifted to the original FEM space can be reconstructed as follows
\begin{equation}
	{\bf u}^{\mathcal{N\!N}}(\y)
	\coloneqq
	\overline{\V}^{\text{(rb)}}_L \left(\realout_{\boldsymbol{\theta}}(\y) + \imath \imagout_{\boldsymbol{\theta}}(\y) \right)
	+
	\overline{\mathbf{u}}
	\in 
	\mathbb{C}^{N_h},
	\quad
	\y \in \truncparamspace.
\end{equation}

\subsection{Approximation Rates of the Galerkin POD-NN}
The Galerkin POD-NN as originally described in \cite{Hesthaven2018} falls in a ever-increasing
body of work usually referred to as \emph{operator learning}. In particular, a thorough study
of the Galerkin POD-NN has been performed in \cite{lanthaler2023operator}.

The following results claim that by separating the real and imaginary of the reduced coefficients does not break the parametric holomorphy property.
Indeed, this is of key importance in establishing dimension-independent emulation rates for the reduced coefficients.

\begin{lemma}
Let Assumption \ref{assumption:parametric_holomorphy} be satisfied
with $\boldsymbol{b} \in \ell^p(\IN)$ and $p\in (0,1)$.
There exists $J_0 \in \IN$ such that for $J\geq J_0$
and for each $\y \in \fullparamspace$ there exists a unique 
$u^{\normalfont\text{(rb)}}_L(\y) \in {V}^{\normalfont\text{(rb)}}_L$
solution to \eqref{eq:solution_rb}.
In addition, the map $\boldsymbol{\pi}^{\normalfont\text{(rb)}}_{L,\mathbb{R}}:\normalfont\text{U} \to \R^{2L}$
is $(\boldsymbol{b},p,\varepsilon)$-holomorphic.
\end{lemma}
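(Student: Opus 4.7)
The statement splits into two parts: pointwise well-posedness of the reduced Galerkin problem \eqref{eq:solution_rb}, and $(\boldsymbol{b},p,\varepsilon)$-holomorphy of the real-valued coefficient map $\boldsymbol{\pi}^{(\text{rb})}_{L,\mathbb{R}}$. My plan is to address them in order, with each step reducing to results already established earlier in the paper.

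For the first part, the Maxwell case is immediate: the coercivity estimate \eqref{eq:proof_coercivity_Maxwell} is uniform in $\y$ and passes to any closed subspace of $H_0(\Curl;\text{D}_0)$, so Lax-Milgram applied on $V^{(\text{rb})}_L$ yields existence and uniqueness for every $\y \in \fullparamspace$, every $L$, and every $J$. The Helmholtz case is more delicate since only a Garding-type coercivity is available; well-posedness on the reduced space must be obtained via the Banach-Ne\v{c}as-Babu\v{s}ka framework, exactly as in the full-order analysis of Section \ref{sec:discrete_FOM_Helmholtz}. A Schatz-type duality argument then delivers the requisite discrete inf-sup condition on $V^{(\text{rb})}_L$, provided the reduced space resolves the adjoint solution manifold sufficiently well. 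By Lemma \ref{lmm:kolmogorov} this is achieved once $L$ is sufficiently large, and compactness of $\fullparamspace$ promotes the threshold to a uniform one, yielding the $J_0$ claimed in the statement.

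For the second part, I would start from Proposition \ref{prop:parametric_holomorphy}, which asserts $(\boldsymbol{b},p,\varepsilon)$-holomorphy of the discrete parameter-to-solution map $\y \mapsto \mathbf{u}_h(\y)$. The complex-valued intermediate map
\begin{equation}
\boldsymbol{\pi}^{(\text{rb})}_L(\y) = \V^{(\text{rb})\dagger}_L\bigl(\mathbf{u}_h(\y) - \overline{\mathbf{u}}\bigr)
\end{equation}
is the composition of $\y \mapsto \mathbf{u}_h(\y)$ with a fixed, $\y$-independent affine $\C$-linear transformation, so each of the three items in Definition \ref{def:bpe_holomorphy} transfers componentwise with the same bounding sequence $\boldsymbol{b}$, the same summability $p$, and the same admissibility parameter $\varepsilon$.

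The only genuinely new ingredient is the passage from $\boldsymbol{\pi}^{(\text{rb})}_L$ to $\boldsymbol{\pi}^{(\text{rb})}_{L,\mathbb{R}}$, since the operations $\Re$ and $\Im$ are not $\C$-linear and thus cannot be extended holomorphically by direct composition. Here I would invoke the Schwarz reflection principle: writing $c_l$ for the $l$-th component of $\boldsymbol{\pi}^{(\text{rb})}_L$, with holomorphic extension again denoted $c_l(\z)$ on $\mathcal{O}_{\boldsymbol{\rho}}$, I define
\begin{equation}
\alpha_l(\z) \coloneqq \tfrac{1}{2}\bigl(c_l(\z) + \overline{c_l(\overline{\z})}\bigr), \qquad \beta_l(\z) \coloneqq \tfrac{1}{2\imath}\bigl(c_l(\z) - \overline{c_l(\overline{\z})}\bigr).
\end{equation}
Since $\mathcal{O}_{\boldsymbol{\rho}}$ is invariant under componentwise complex conjugation, both $\alpha_l$ and $\beta_l$ are holomorphic on $\mathcal{O}_{\boldsymbol{\rho}}$, agree with $\Re c_l$ and $\Im c_l$ on the real slice $\fullparamspace$, and inherit the same uniform bound on $\mathcal{E}_{\boldsymbol{\rho}}$ as $c_l$ up to a factor of $2$. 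This verifies all three conditions of Definition \ref{def:bpe_holomorphy} for $\boldsymbol{\pi}^{(\text{rb})}_{L,\mathbb{R}}$. I expect the main obstacle to be the first half, namely the Helmholtz well-posedness, since one must thread the Schatz duality through a reduced subspace whose expressive power is controlled only indirectly through a Kolmogorov-width estimate; the holomorphy half, once Schwarz reflection is in place, reduces to the routine transfer of properties across a continuous affine map.
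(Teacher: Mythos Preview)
Your proposal is correct and follows essentially the same route as the paper. The paper's proof likewise starts from Proposition \ref{prop:parametric_holomorphy}, passes holomorphy through the fixed affine projection to obtain holomorphy of the complex-valued reduced coefficients, notes (as you do) that a minimal level of resolution of the reduced space is needed in the Helmholtz case, and then handles the $\Re/\Im$ splitting by invoking \cite[Lemma A.1]{dolz2023parametric}; your Schwarz-reflection construction $\z\mapsto\tfrac{1}{2}\bigl(c_l(\z)\pm\overline{c_l(\overline{\z})}\bigr)$ is precisely the content of that cited lemma, so you have simply made explicit what the paper outsources to a reference.
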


\begin{proof}
As in the proof of Proposition \ref{prop:parametric_holomorphy}, we can argue that the maps
\begin{equation}
    \text{U}
    \ni 
    \y \mapsto
    \widehat{u}_{h}(\y)
    \in
    H^1(\text{D}_0)
    \quad
    \text{and}
    \quad
    \text{U}
    \ni 
    \y \mapsto
    \widehat{\boldsymbol{E}}_{h}(\y)
    \in
    H_0 \left(\Curl;{\text{D}}_0\right).
\end{equation}
are $(\boldsymbol{b},p,\varepsilon)$-holomorphic, therefore the parameter-to-reduced coefficients
are so as well. We observe that as in the proof of
Proposition \ref{prop:parametric_holomorphy} a minimal level of refinement $h_0>0$ is required for this to hold for the Helmholtz impedance problem, and so is the case for the discretization in the reduced space. 

As pointed out previously, for the sake of the implementation, the real and imaginary parts of these maps are approximated separately. However, the application of either the real or imaginary parts to a complex input
is not an holomorphic map itself. Therefore, one can not argue that the compositions of these maps
yields an holomorphic one. In \cite[Lemma A.1]{dolz2023parametric}, it is proved that both the real
and imaginary parts of complex-valued holomorphic function preserve this property, thus yielding the desired result.
\end{proof}

Equipped with this result, together with \cite[Lemma 2.6]{dolz2023parametric} which in turn follows from 
\cite{ABD22}, we may the following approximation result of the Galerkin POD-NN.

\begin{lemma}
Let Assumption \ref{assumption:parametric_holomorphy} be satisfied
with $\boldsymbol{b} \in \ell^p(\IN)$ and $p\in (0,1)$. In addition, assume that
$\boldsymbol{b}$ is strictly decreasing.
For each $n \in \mathbb{N}$ there exists a sequence of tanh NN
$\Psi^{(n)}_{\mathcal{N\!N}} \in \mathcal{N\!N}_{D,H,J,2L}$
and $C>0$ such that
\begin{equation}
	\norm{
		\boldsymbol{\pi}^{\normalfont\text{(rb)}}_{L,\mathbb{R}}
		-
		\Psi^{(n)}_{\mathcal{N\!N}} 
	}_{L^2(\normalfont\text{U}^{(J)};\mathbb{R}^{2L})}
	\leq
	C
	n^{-\left(\frac{1}{p} - \frac{1}{2}\right)}
\end{equation}
with 
$
	D 
	=
	\mathcal{O}(n^2)
$
and
$
	H
	=
	\mathcal{O}
	\left(
		\log_2(n)
	\right)
$.
\end{lemma}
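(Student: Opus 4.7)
The plan is to obtain the result as a direct consequence of a generic neural network emulation theorem for $(\boldsymbol{b},p,\varepsilon)$-holomorphic maps, which is already cited in the excerpt. Specifically, I would invoke \cite[Lemma 2.6]{dolz2023parametric} (itself a consequence of the tanh NN emulation framework in \cite{ABD22}) applied to the map $\boldsymbol{\pi}^{\text{(rb)}}_{L,\mathbb{R}}: \normalfont\text{U}^{(J)} \to \mathbb{R}^{2L}$. The preceding lemma has just established that this map is $(\boldsymbol{b},p,\varepsilon)$-holomorphic, which is exactly the hypothesis needed, and the strict monotonicity assumption on $\boldsymbol{b}$ imposed in the statement is inherited from the hypotheses of the cited emulation result (it is used to control the cardinality of the downward closed multi-index set supporting the sparse Legendre approximation that the tanh NN emulates).

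First, I would reduce the vector-valued target to scalar ones. The cited approximation result is typically stated for scalar $(\boldsymbol{b},p,\varepsilon)$-holomorphic maps, so I would apply it to each of the $2L$ output components $\alpha_{\ell}$ and $\beta_{\ell}$, $\ell = 1, \ldots, L$, separately. Each yields a tanh NN $\psi^{(n)}_i \in \mathcal{N\!N}_{D_i, H_i, J, 1}$ with $D_i = \mathcal{O}(n^2)$, $H_i = \mathcal{O}(\log_2 n)$, and scalar $L^2(\normalfont\text{U}^{(J)})$ error bounded by $C n^{-(1/p - 1/2)}$. I would then assemble these into a single NN $\Psi^{(n)}_{\mathcal{N\!N}}$ with $2L$ outputs, either by running them in parallel (increasing the width by a factor of $2L$) or, more economically, by sharing a common trunk. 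Since $L$ is fixed (independent of $n$), the asymptotic complexity bounds $D = \mathcal{O}(n^2)$ and $H = \mathcal{O}(\log_2 n)$ are preserved, and the vector $L^2$ error is controlled by the sum of the squared scalar errors, still of the order $n^{-(1/p - 1/2)}$.

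Second, I would verify the dimension-independence of the constants in $J$. The map $\boldsymbol{\pi}^{\text{(rb)}}_{L,\mathbb{R}}$ on the truncated parameter space $\normalfont\text{U}^{(J)}$ coincides with the restriction of the countably-parametric map to the slice $\{y_j = 0 : j > J\}$. Because the $(\boldsymbol{b},p,\varepsilon)$-holomorphy hypothesis is formulated on $\normalfont\text{U} = [-1,1]^{\mathbb{N}}$ with the same sequence $\boldsymbol{b} \in \ell^p(\mathbb{N})$, the restricted map is also $(\boldsymbol{b},p,\varepsilon)$-holomorphic, and the emulation result yields constants $C>0$ independent of $J$.

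The main obstacle, if any, is bookkeeping rather than substance: one must ensure that the norm used in \cite[Lemma 2.6]{dolz2023parametric} (often the uniform norm over $\normalfont\text{U}$ in the scalar case) is upgraded or weakened to the $L^2(\normalfont\text{U}^{(J)};\mathbb{R}^{2L})$ norm appearing here, and that the vector-valued concatenation step does not degrade the polynomial-in-$n$ depth nor the logarithmic-in-$n$ width. Both are standard: the $L^2$ norm is dominated by the $L^\infty$ norm on the bounded parameter cube, and concatenation at fixed output dimension $2L$ inflates constants but not orders. With these verifications in place, the claimed rate $C n^{-(1/p - 1/2)}$ and the architectural bounds $D = \mathcal{O}(n^2)$, $H = \mathcal{O}(\log_2 n)$ follow immediately.
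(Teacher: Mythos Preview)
Your proposal is correct and matches the paper's approach exactly: the paper does not give a standalone proof but simply states that the result follows from the preceding holomorphy lemma together with \cite[Lemma 2.6]{dolz2023parametric} (which in turn rests on \cite{ABD22}). Your additional bookkeeping on the scalar-to-vector concatenation and the $L^\infty$-to-$L^2$ norm comparison is sound and merely fills in details the paper leaves implicit.
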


\section{Numerical experiments}
\label{sec: Numerical results}

\subsection{Numerical implementation}
We use the programming language Julia to conduct our numerical experiments. The high-fidelity FE methods for solving the Helmholtz impedance and Maxwell lossy cavity problem are implemented in \texttt{Gridap.jl} \cite{Badia2020} using Lagrange and Nédélec elements, respectively. After assembly, the linear system is solved by Julia's native linear solver.
For the boundary variations, the parameter space of the affine transformations is sampled using a Halton or Latin Hypercube sequence generated by the library \texttt{QuasiMonteCarlo.jl}. To compute the solution of the Galkerin-POD method, the linear operators are first assembled in \texttt{Gridap.jl}, then projected onto the reduced basis, and solved. The neural network architectures and training implementation are based on the library \texttt{Flux.jl} \cite{Innes2018}.
\subsection{Performance evaluation}
As in \cite{Hesthaven2018}, we consider the following \emph{relative} error measures with respect to a high-fidelity solution ${\mathbf{u}_h(\y)}$ to assess the performance of a model:
\begin{enumerate}
	\item the G-POD relative error
    	\begin{align*}
        		\mathcal{E}_{\mathrm{G}}(L, \y)
		\coloneqq
		\frac{
			\norm{
				{\mathbf{u}_h(\y)}
				- 
				\left(
					\overline{\V}^{\text{(rb)}}_L\overline{\mathbf{u}}^{\text{(rb)}}_L(\y)
					+
					\overline{\mathbf{u}}
				\right)
			}
		}{\norm{\mathbf{u}_h(\y)}},
    	\end{align*}
    	\item the POD-NN relative error
        	\begin{align*}
        		\mathcal{E}_{\mathrm{NN}}(L, \y) 
		\coloneqq 
		\frac{
			\norm{
				{\mathbf{u}_h(\y)}
				-
				\left(
				\overline{\V}^{\text{(rb)}}_L \left(\realout_{\boldsymbol{\theta}}(\y) 
				+ \imath \imagout_{\boldsymbol{\theta}}(\y) \right)
				+
				\overline{\mathbf{u}}
				\right)
				}
		}{\norm{\mathbf{u}_h(\y)}},
    	\end{align*}
   	\item the relative projection error, i.e. the relative error between the reconstruction of the projection of a high-fidelity solution and itself
    	\begin{align*}
        		\mathcal{E}_{\V}(L, \y) 
		\coloneqq
		\frac{
			\norm{
				{\mathbf{u}_h(\y)}
				- 
				\left(
					\overline{\V}^{\text{(rb)}}_L({\mathbf{u}_h(\y)}-\overline{\mathbf{u}})
					+
					\overline{\mathbf{u}}
				\right)
				}
		}{\norm{\mathbf{u}_h(\y)}}
    \end{align*}
\end{enumerate}
Clearly, the latter error is a lower bound to the first two. To analyze the global performance of the model, the averages of the above error measures over  test set are considered.

\subsection{Numerical Results for the Helmholtz Impedance Problem}
We test our implementation on the Helmholtz problem across different hyperparameter settings. We impose impedance boundary conditions as defined in ~\eqref{eq: helmholtz problem with impedance boundary condition}. The following parameter choices are our standard setting unless specified otherwise. The wave number is set to $\kappa=1$, and the boundary variation is parametrized with the Matérn-like decay, with $\nu = 0.5$, $l=0.1$, $\theta=0.1$, and parameter dimension $J = 50$. Figure \ref{fig: graphical results} (a) and (b) show the original mesh on the unit cube, as well as a typical deformation of the boundary and domain. We sample 1024 snapshots from a Halton sequence for the construction of the reduced basis and the training of the network. To obtain an unbiased test set, we sample 512 snapshots from a Latin Hypercube sampling, which is used to evaluate the error measures introduced in the previous section, so that we avoid any positive biases between training and test set.\\
The NN baseline architecture comprises $D=2$ hidden layers with $H=30$ neurons and the $\tanh$  activation function. The network parameters are trained on the loss in equation \eqref{eq: MSE_loss_nn}  for 4000 epochs with the ADAM optimizer using a learning rate of $5e-4$.  Figure \ref{fig: graphical results} (c) and (d) show the high-fidelity and POD-NN solution, respectively, depicting no visible differences between the two.

\begin{figure}
    \centering
    \begin{subfigure}{0.4\textwidth}
    \includegraphics[scale=0.3]{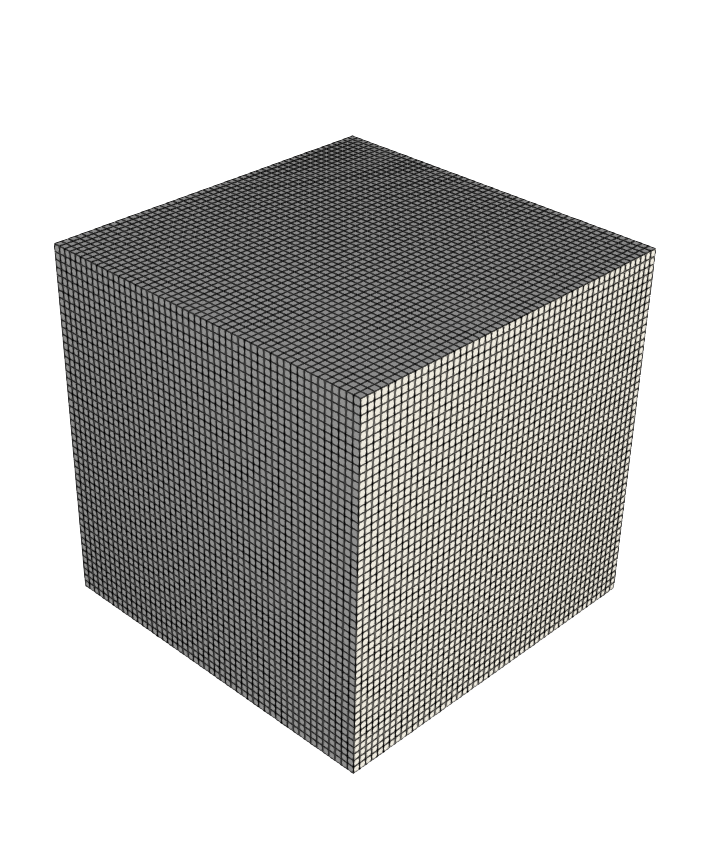}
    \caption{}
    \end{subfigure}
    \hfill
    \begin{subfigure}{0.4\textwidth}
    \includegraphics[scale=0.3]{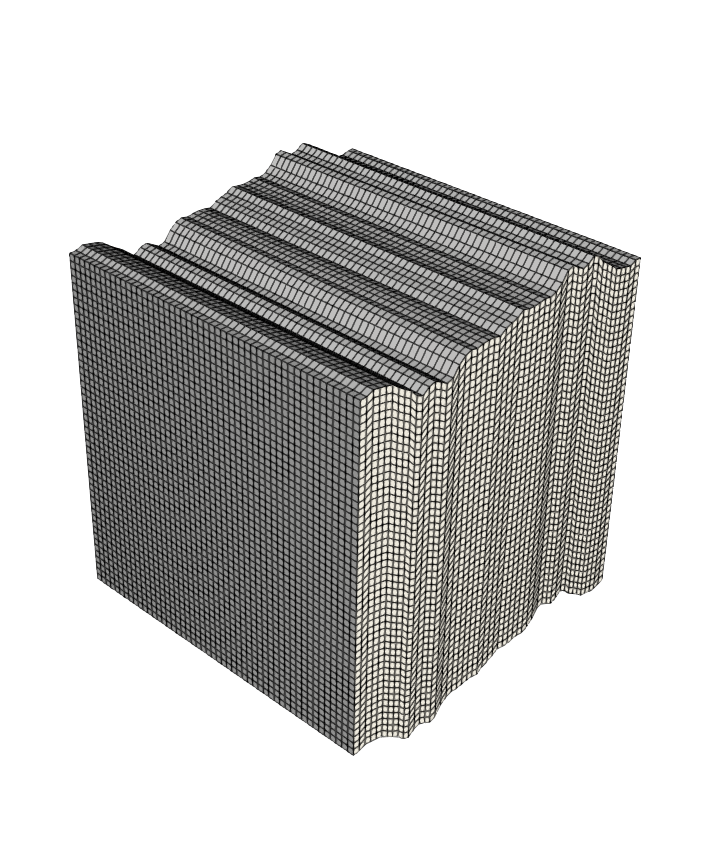}
    \caption{}
    \end{subfigure}
    \hfill
    \begin{subfigure}{0.4\textwidth}
    \includegraphics[scale=0.3]{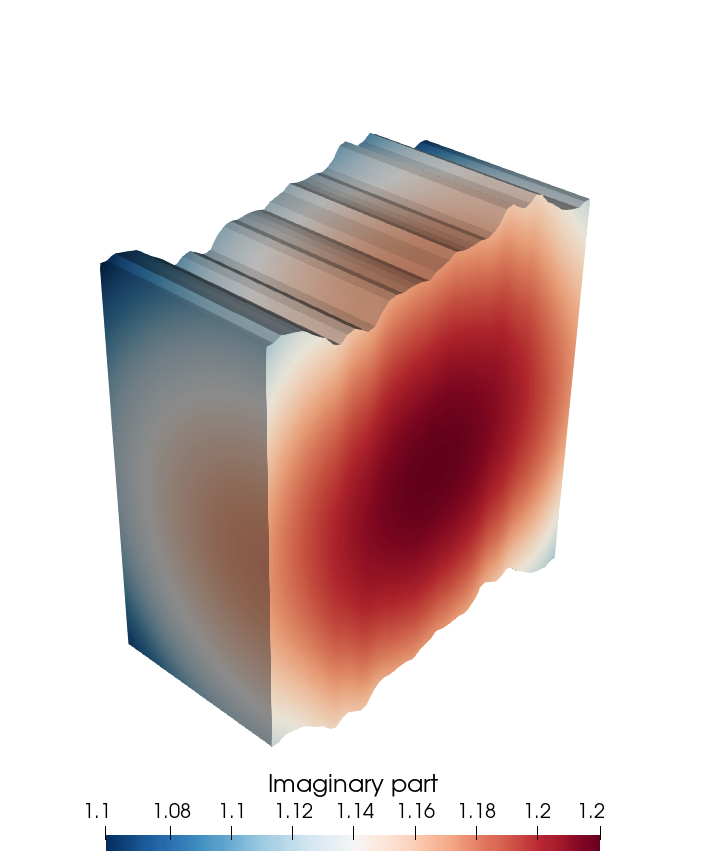}
    \caption{}
    \end{subfigure}
    \hfill
    \begin{subfigure}{0.4\textwidth}
    \includegraphics[scale=0.3]{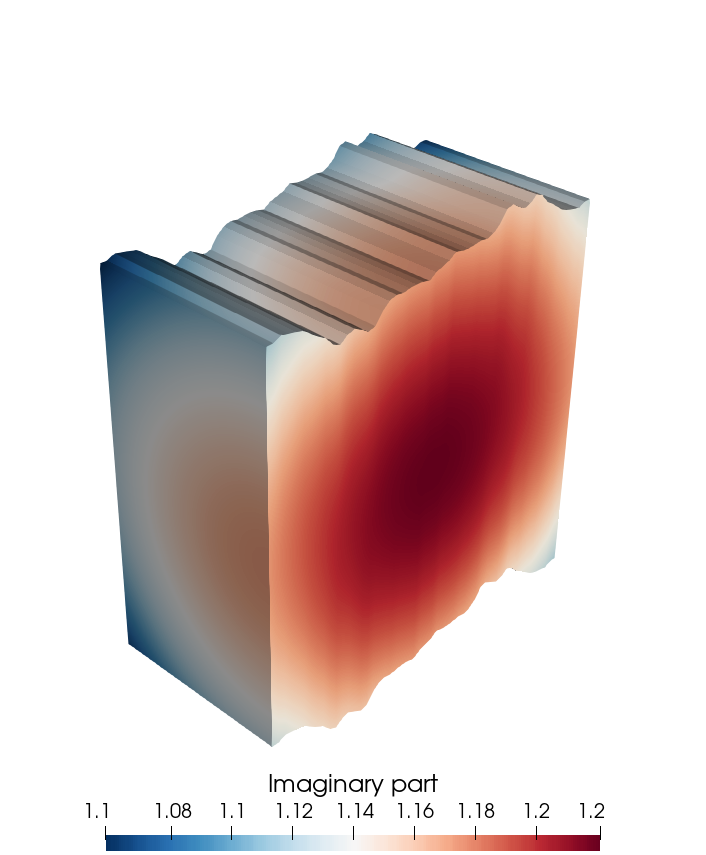}
    \caption{}
    \end{subfigure}
    \hfill
    \caption{Computational meshes and graphical results for the Helmholtz problem: (A) reference mesh, on which the solutions are computed. (B) Physical domain. (C) Imaginary part of the full-order solution. (D) Imaginary part of the POD-NN prediction. The solution to the Helmholtz problem was computed for the parameters $\theta = 0.5, l=0.1, \nu=0.5$, and $J =50$. The domain deformation is amplified by a factor of two for better visibility.}
    \label{fig: graphical results}
\end{figure}

Figure \ref{fig:coeffs} shows the scaling coefficients for the parameters in the 50-dimensional space, particularly the differences between the algebraic and Matérn-like decay. When computing the POD on the assembled snapshot matrix, we further observe that the decay of the singular values (see Figure \ref{fig:svals}) qualitatively follows the trend of the parameter decay. As expected, the fastest algebraic decay $r$=3, also leads to the fastest decay in singular values, while those of the Matérn case decay more slowly. We can thus confirm that an efficient RB construction with a limited basis size is possible for all domain mappings under consideration, which aligns with our theoretical statements in Section 3. 

\begin{figure}
\centering
\captionsetup{width=.48\linewidth}
    \captionbox{The first 50 coefficients $\mu_j$ for different parameters of algebraic (blue tones) and Matérn (red tones) decay.\label{fig:coeffs}}
    [.48\textwidth]{\includegraphics{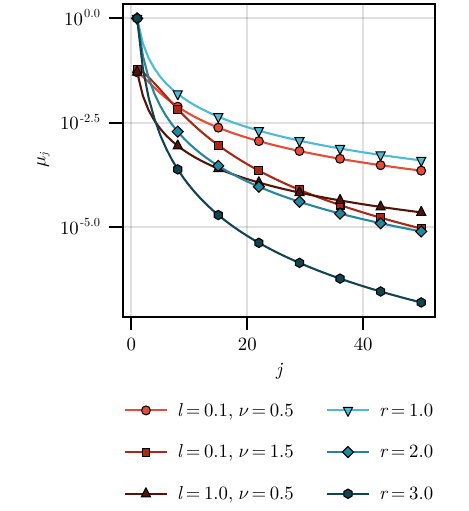}}%
    \hfill
    \captionbox{The singular values of 1024 snapshots for different parameters of algebraic (blue tones) and Matérn (red tones) decay. The input parameters originate from the same Halton sequence.\label{fig:svals}}
    [.48\textwidth]{\includegraphics{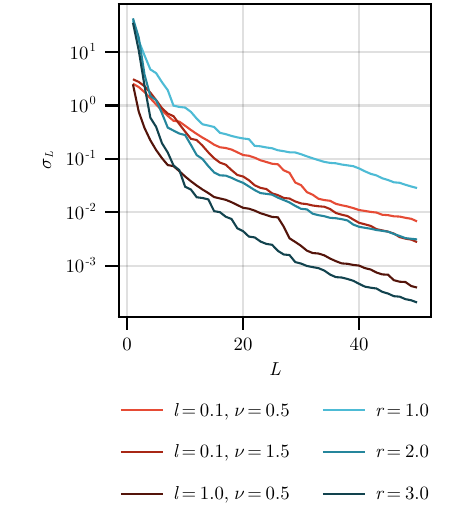}}
\end{figure}

In Figure \ref{fig:architecture comparison}, we assess the test error for different NN architectures and vary the number of modes $L$ in the RB basis. In all figures, we also report the error with zero basis functions ($L$=0), i.e. the error of using the mean field as a predictor. For the standard setting, we observe that the mean field already leads to a low relative error of 3e-4, indicating that the parametric variation in this case is limited. Nevertheless, the best POD-NN architecture further reduces this error by an order of magnitude to 3e-5. We further observe that increasing the depth of the NN does not lead to performance gains. Increasing the width does reduce the error further, but we observe a growth in error after more than 20 basis functions are added for all architectures.  The increase in error is likely due to the fact that the learning problem becomes more difficult by adding more modes, as more RB coefficients need to be approximated: we have verified, that the error increase does not occur when the RB coefficients are approximated by separate NNs. 

\begin{figure}
    \centering
\includegraphics{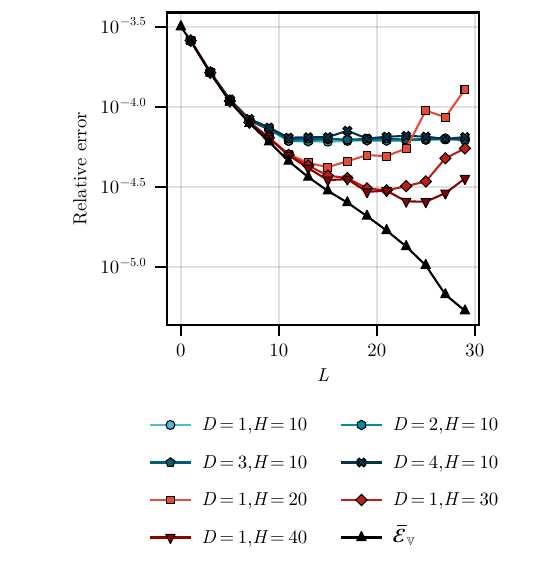}
    \caption{Test errors for different neural network architectures, i.e., different numbers of hidden layers $D$ and neurons per layer $H$. All models were trained using Adam with learning rate 5e-4, $\beta_1 = 0.8$ and $\beta_2 = 0.9$ for 4000 epochs. The networks were trained on 1024 snapshots sampled from a Halton sequence of Matérn decay parameters with the following settings: $\theta = 0.1, J = 50, \nu = 0.5, l = 0.1$.}
    \label{fig:architecture comparison}
\end{figure}

To demonstrate that the developed POD-NN approach may be used across a range of domain mappings, we compare different deformation scalings in Figure \ref{fig:deformation scaling varpar comparison} (left), and unsurprisingly, larger deformations are harder to approximate. Not only is the mean field a worse predictor ($L=$0), both the Galerkin-POD RB method and the POD-NN struggle to decrease the error below 8e-3 for the largest deformation: while the error for $\theta=0.1$ drops below 1e-4, the error for $\theta=0.5$ only drops below 1e-3 for 30 modes, which is only a marginal improvement compared to the error of using only the mean field. That being said, these results were computed for a parameter dimension of size 50, which is a very challenging learning problem. This effect becomes clearly visible in Figure \ref{fig:deformation scaling varpar comparison} (right), where we observe a significantly larger drop in error for parameter dimension size $J=10$. Interestingly, in all cases, we observe a barrier in error for the POD-NN, while the error of the Galerkin-POD may increase intermediately but ultimately keeps decreasing in this case.

\begin{figure}
    \centering
\includegraphics{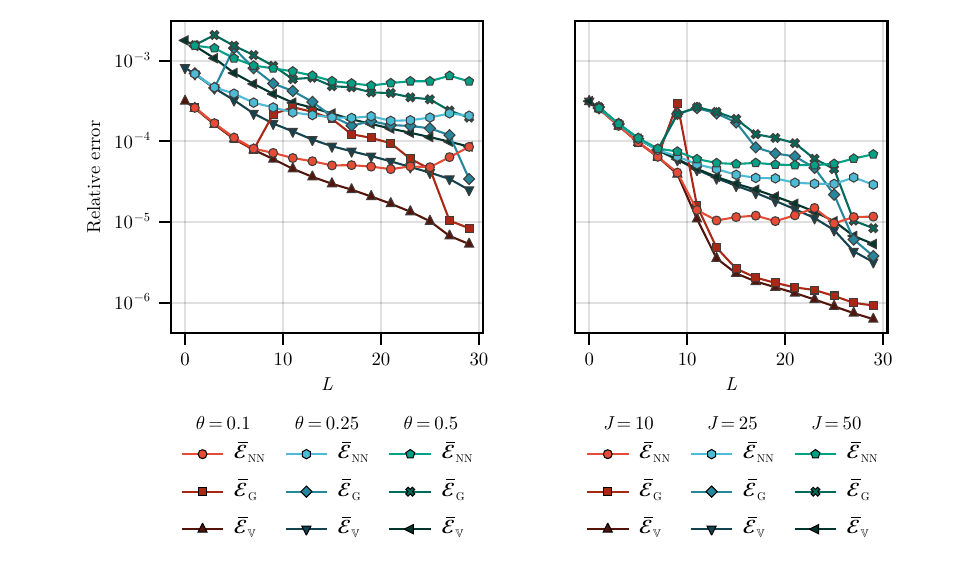}
    \caption{Test errors for deformation scalings (l.) and parameter dimensions (r.). All models were trained using Adam with learning rate 5e-4, $\beta_1 = 0.8$ and $\beta_2 = 0.9$ for 4000 epochs. The networks were trained on 1024 snapshots sampled from a Halton sequence of Matérn decay parameters with the following settings: $\nu = 0.5, l = 0.1$.}
    \label{fig:deformation scaling varpar comparison}
\end{figure}

In Figure \ref{fig:decay comparison}, we compare the effect of the different decays. For the algebraic decay, we seem to be unable to learn meaningful information past the first five modes.
In this case the Galerkin-POD appears to perform much better, which may be attributed to overfitting in the training process as the parameter domain is sampled very sparsely. For the Matérn type decay, we observe a more clear decrease in error for both the POD-NN and the Galerkin-POD. Figure \ref{fig:coeff errors} shows the relative error in the RB coefficient. It is notable that the first seven coefficients have a lower error of around 10$\%$, while the error can increase to up to 50$\%$ for the following coefficients. This is consistent with the barrier in error decay that we have observed: The neural network does not seem to learn useful information past the first eight modes. 

\begin{figure}
    \centering
\includegraphics{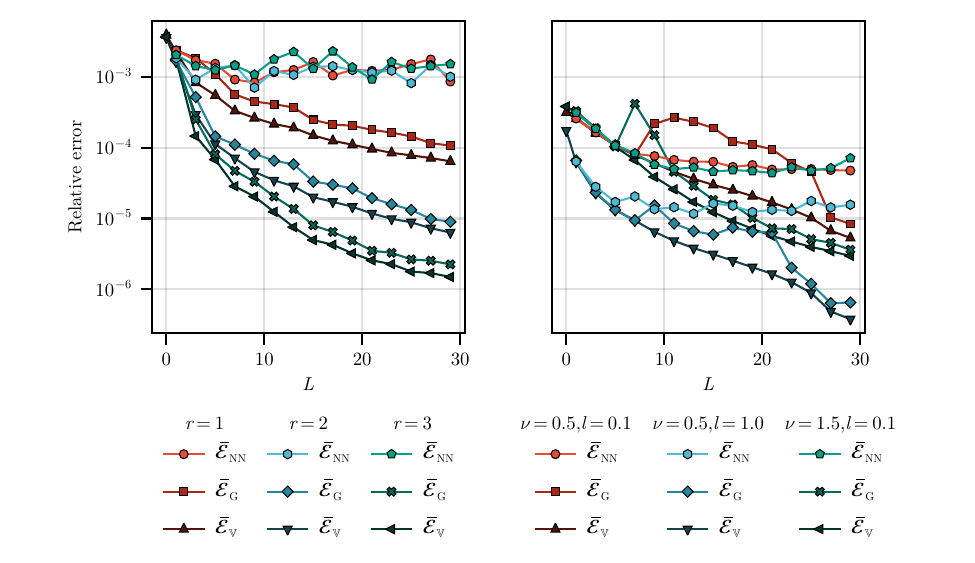}
    \caption{Test errors for algebraic decay (l.) and Matérn decay parameters (r.). All models were trained using Adam with learning rate 5e-4, $\beta_1 = 0.8$ and $\beta_2 = 0.9$ for 4000 epochs. The networks were trained on 1024 snapshots sampled from a Halton sequence of corresponding decay type parameters with the following settings: $\theta = 0.1, J = 50$.}
    \label{fig:decay comparison}
\end{figure}

\begin{figure}
    \centering
    \includegraphics{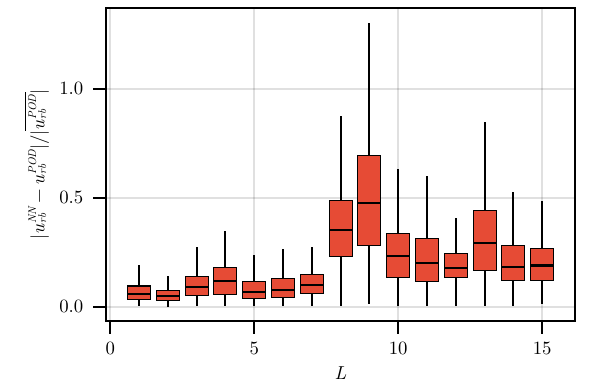}
    \caption{Errors relative to the mean reduced coefficient mode by mode committed by the neural network interpolation. The NN has $D = 2$ hidden layers and $H = 30$ neurons per layer and has been trained with the usual settings on 1024 snapshots (Halton sampling) of Matérn perturbations ($\nu = 0.5$, $l = 0.1$).}
    \label{fig:coeff errors}
\end{figure}

Figure \ref{fig:frequency} explores the effect of different wave numbers $\kappa$. Perhaps unsurprisingly, higher frequencies are harder to approximate. For 
$\kappa$ = 16, the mean field incurs an error of 5e-2, which might be considered a large error for some applications, while the POD-NN and RB method lead to more reliable predictions with an  error of 3e-3, i.e. we again gain about a factor 10 in accuracy.
Lastly, while the POD-NN method has similar or sometimes higher errors than the classic RB method, Figure \ref{fig:speed up} shows that the POD-NN is about a factor of 10000 faster. While the POD-NN method only requires an NN evaluation and some vector operations, the RB method requires full assembly of the linear system operators, which leads to this large discrepancy. In cases, where the mean field alone is not a reliable predictor, the proposed POD-NN method can thus be a valuable and efficient tool to approximate the parameter-to-solution map.

\begin{figure}
\centering
\captionsetup{width=.48\linewidth}
    \captionbox{Test errors for different wavenumbers $\kappa$. All models were trained using Adam with learning rate 5e-4, $\beta_1 = 0.8$ and $\beta_2 = 0.9$ for 4000 epochs. The networks were trained on 1024 snapshots sampled from a Halton sequence of Matérn decay parameters with the following settings: $\theta = 0.1, J = 50, \nu = 0.5, l = 0.1$.\label{fig:frequency}}
    [.48\textwidth]{\includegraphics{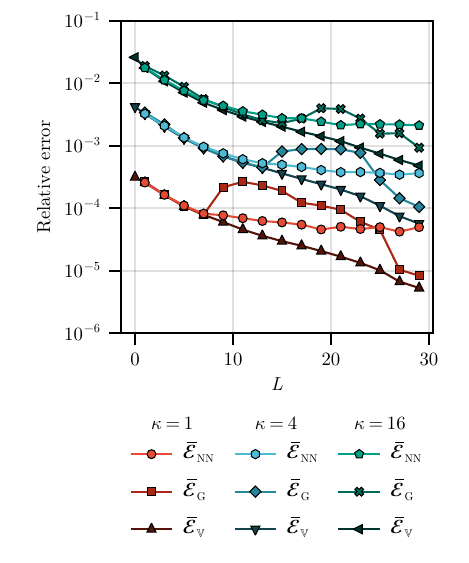}}%
    \hfill
    \captionbox{Speed up relative to the high-fidelity solver achieved with Intel(R) Xeon(R) Gold 6148 CPUs for the Galerkin-POD and POD-NN method, respectively.\label{fig:speed up}}
    [.48\textwidth]{\includegraphics{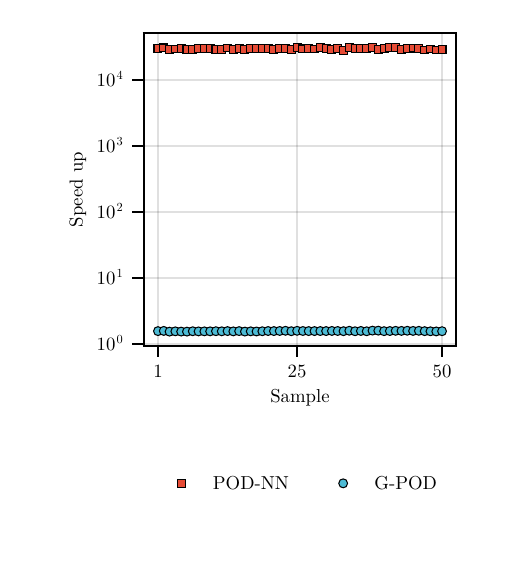}}
\end{figure}

Our experiments further indicate, that there is usually an ``optimal" number of basis functions, for which the POD-NN achieves the lowest error or does not benefit from adding more RB modes. To achieve maximum efficiency in the online phase, it may thus be desirable to truncate to a certain number of basis functions. Alternatively, adding more training data, i.e. evaluate more HF snapshots, might be necessary to train the POD-NN optimally for a higher number of basis functions: it seems that fewer snapshots are required to find a good basis than for training the NN to learn the parameter to solution map. 

\subsection{Numerical Results for the Maxwell cavity problem}
For this problem,  we follow the problem setup described in  Section \ref{eq:sec_maxwell_cavity} with the model constants set as $\omega=1, \Lambda = 1 -\imath$, $\mu=1$. The FE model uses first-order Nédélec elements with a resolution of 50 cells per side. The boundary variation is parametrized with an algebraic decay of dimension $J=10$, with scaling $\theta = 0.1$. Once again, we generate a training set on a Halton sequence of 1024 points and a test set from Latin Hypercube sampling. In Figure \ref{fig:maxwell decay}, the error of the Galerkin-POD solution perfectly follows the projection error: it decreases monotonically when more basis functions are added, confirming the success of the RB construction.
Figure \ref{fig:maxwell decay} further shows that the mean field (L=0) is probably not a sufficient predictor in this case, as the error is larger than 10\% for all algebraic decay rates. The Galerkin POD-NN also leads to satisfactory error for the strongest decay (r=3), which plateaus just below 1e-3. For the slower decay rates, the error appears to plateau for $L>2$, indicating that the NN is not able to learn the map from the parameters to the reduced coefficients for the additionally added basis functions. More data points in the parameter space are likely needed to achieve higher accuracy. In \ref{fig:maxwell freq}, we observe that the POD-NN method is also successful for higher circular frequencies of the Maxwell problem and provides vastly better accuracy than the mean prediction alone.

\begin{figure}
\centering
\captionsetup{width=.48\linewidth}
    \captionbox{Test error for the Maxwell problem with different algebraic decay rates.\label{fig:maxwell decay}}
    [.48\textwidth]{\includegraphics{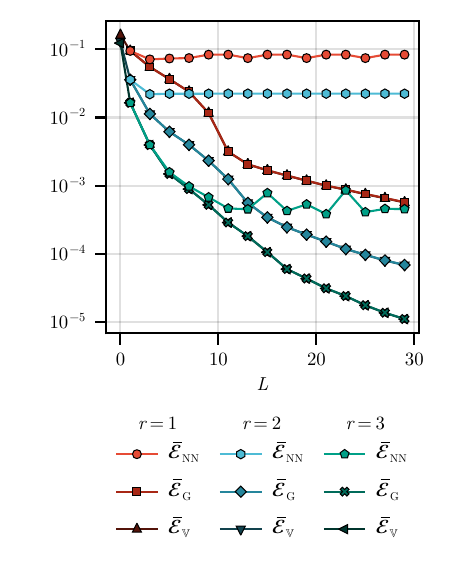}}%
    \hfill
    \captionbox{Test error for the Maxwell problem with different frequencies $\omega$ and algebraic coefficient decay with rate $r=3$.\label{fig:maxwell freq}}
    [.48\textwidth]{\includegraphics{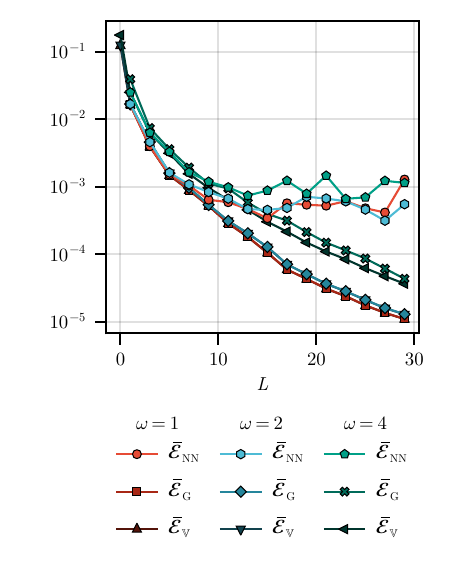}}
\end{figure}

\section{Concluding Remarks}
\label{sec:concluding_remarks}
In this work, we present a Galerkin POD-NN method for surrogate modeling
of three-dimensional acoustic and electromagnetic  wave problems with
parametric-affine shape deformations. Using readily available results for the Maxwell
cavity problem and our analysis for the Helmholtz impedance problem, we demonstrate the analytic or holomorphic dependency of the 
problem's solution upon such parametric shape deformations.
Based on this analysis, we argue that  computational models for this class of problems are amenable to complexity 
reduction using a projection-based reduced basis scheme irrespective  of the dimensionality
of the parametric domain. 
Following the same argument, the 
map from the parameters to the coefficients of the RB basis representation is also amenable to approximation, done here by
using NNs. Unlike many commonly computational models for which the Galerkin POD-NN has been applied,
the involved quantities are complex-valued. We propose a formulation in which we separate the reduced coefficients
into their real and imaginary parts. In the training stage, we consider each as a separate trainable, real-valued output, thus allowing us to retain an NN with real-valued features. 

Our numerical experiments indicate that the mean field may be a predictor
of reasonable accuracy in some cases, whereas the POD-NN method can
improve this by an order of magnitude or more.
The success of the surrogate model also critically depends on the
hyper-parameter choices of the original problem and interestingly seems
to work well on the Matérn-type decay.
We have further observed that for a given data set, there seems to be an
optimal number of basis functions in terms of efficiency, as no more gains
in accuracy can be achieved by increasing the basis size. 
While our theoretical investigations prove convergence rates of the reduced order approximation independent of the parameter dimension, the cost of accurately creating such a reduced order approximation still scales with the parametric dimension, e.g., via the low-discrepancy series or the number of training points that are required for the NN approximation. In principle, this may be addressed by using tailored, high-dimensional quadrature rules as in 
\cite{longo2021higher}.

In the current work, the considerable computational cost of obtaining high fidelity training data for complex 3D problems, limited the size of our training data set, which also shows in the limited success for the most complex problems.
In future work, it would be interesting to investigate whether adding more training data through a larger snapshot set or via an active learning scheme can overcome the error barriers that we observe. 
Similarly,  including physical knowledge into the NN model, for instance, by adding the residual of the underlying PDE to the loss function, could improve performance when the training data set is of limited size. In addition, the method's performance could be improved by considering a multi-fidelity setup, where cheaply available lower-fidelity training data with e.g., reduced resolution, provides better coverage of the solution manifold.

\appendix
\section{Parametric Holomorphy of the Helmholtz Problem}\label{appendxi:proof_holomorphy}
In \cite{HSSS15}, based on a small wavenumber assumption, it is proved parametric holomorphy of the
parameter-to-solution map of the Helmholtz transmission problem. Here, we provide a complete
argument for the Helmholtz impedance problem without the aforementioned assumption.
\begin{proof}[Proof of item (ii) in Proposition \ref{prop:parametric_holomorphy}]
As it has been proved
before, the map
$\text{U} \ni \y \mapsto \boldsymbol{T}(\y) \in W^{1,\infty}(\text{D}_0; \mathbb{R}^{3 \times 3})$ is
$(\boldsymbol{b},p,\varepsilon)$-holomorphic for some $\varepsilon>0$, see e.g.~\cite{CSZ18}. 
In turn this implies that the maps
\begin{equation}\label{eq:bpes_jacobian}
	\text{U}
	\ni
	\y
	\mapsto
	\left(d \boldsymbol{T}(\y)\right)^{-\top}
	\in
	L^{\infty}\left(\text{D}_0;\mathbb{R}^{3\times 3}\right)
	\quad
	\text{and}
	\quad
	\text{U}
	\ni
	\y
	\mapsto
	J(\y)
	\in 
	L^{\infty}\left(\text{D}_0\right)
\end{equation}
are $(\boldsymbol{b},p,\varepsilon)$-holomorphic as well, which 
follows using \cite{CSZ18}. In addition, according \cite[Lemma 2.14]{dolz2023parametric}
the map 
\begin{equation}\label{eq:bpes_normal}
	\text{U} \ni \y 
	\mapsto 
	\hat{\boldsymbol{\nu}}(\y) \in L^{\infty}\left(\text{D}_0;\mathbb{R}^{3}\right)
\end{equation}
is $(\boldsymbol{b},p,\varepsilon)$-holomorphic for some $\varepsilon>0$, and as a
consequence the map 
\begin{equation}\label{eq:bpes_surface_jacobian}
	\text{U} \ni \y 
	\mapsto 
	J_{\text{S}}(\y)  \in L^{\infty}\left(\text{D}_0\right)
\end{equation}
is so as well with
the same $\boldsymbol{b} \in \ell^p(\mathbb{N})$ and $p\in (0,1)$, however possibly with a different
$\varepsilon>0$. In turn, these results imply that the map
\begin{equation}
	\text{U} \ni \y 
	\mapsto
	\hat{\mathsf{a}}
	\left(
		\cdot
		, 
		\cdot
		;
		\y
	\right) 
	\in
	\mathscr{L}
	\left(
		H^1(\text{D}_0)
		\times
		H^1(\text{D}_0)
		;
		\mathbb{C}
	\right)	
\end{equation}
is $(\boldsymbol{b},p,\varepsilon)$-holomorphic for some $\varepsilon>0$
and the same $\boldsymbol{b} \in \ell^p$ and $p\in (0,1)$,
where, for a Banach space $X$,
$\mathscr{L}\left(X \times X; \mathbb{C}\right)$ denotes the
space of continuous sesquilinear forms in $X$, which equipped
with the norm
$$
	\|\mathsf{b}\|_{\mathscr{L}\left(X \times X; \mathbb{C}\right)}
	\coloneqq
	\sup \limits_{u,v \in X \backslash \{0\}} 
	\frac{|\mathsf{b}(u,v)|}{\|u\|_X \|v\|_X},
	\quad
	\mathsf{b}
	\in
	\mathscr{L}\left(X \times X; \mathbb{C}\right),
$$
is a Banach space itself.
Furthermore, the one can also verify based on the previously stated
results that the map
\begin{equation}
	\text{U}
	\ni
	\y
	\mapsto
	\hat{\ell}(\cdot,\y)
	\in
	\left(
		H^{1}(\text{D}_0)
	\right)'
\end{equation}
is $(\boldsymbol{b},p,\varepsilon)$-holomorphic.
In addition, for each $\y \in \text{U}$ the sesquilinear form satisfies
a Garding's inequality, with in turn implies for each $\y \in \text{U}$
inf-sup conditions of the form
\begin{equation}
\begin{aligned}
	\inf_{\hat{v} \in H^1(\text{D}_0)\backslash \{0\}} 
	\sup_{\hat{w} \in H^1(\text{D}_0)\backslash \{0\}} 
	\frac{\snorm{\hat{\mathsf{a}}(\hat{v},\hat{w};\y)}}{\|u\|_X \|v\|_X}
	&
	\geq
	\alpha, \quad \text{and} \\
	\quad
	\inf_{\hat{w} \in H^1(\text{D}_0)\backslash \{0\}} 
	\sup_{\hat{v} \in H^1(\text{D}_0)\backslash \{0\}} 
	\frac{\snorm{\hat{\mathsf{a}}(\hat{v},\hat{w};\y)}}{\|u\|_X \|v\|_X}
	&
	>
	\alpha
\end{aligned}
\end{equation}
for a constant $\alpha>0$ independent of $\y \in \text{U}$.
By using a perturbation argument we may conclude
that there exists $\widetilde{\alpha}(\widetilde{\varepsilon})>0$ depending
on some $\widetilde{\varepsilon}>0$ such that for any
$(\boldsymbol{b},\widetilde{\varepsilon})$-admissible
(as in Definition \ref{def:bpe_holomorphy}),
for each $\boldsymbol{z} \in \mathcal{O}_{\boldsymbol\rho}$
we have inf-sup conditions of the form
\begin{equation}
\begin{aligned}
	\inf_{\hat{v} \in H^1(\text{D}_0)\backslash \{0\}} 
	\sup_{\hat{w} \in H^1(\text{D}_0)\backslash \{0\}} 
	\frac{\snorm{\hat{\mathsf{a}}(\hat{v},\hat{w};\z)}}{\|u\|_X \|v\|_X}
	&
	\geq
	\widetilde{\alpha}(\widetilde{\varepsilon}), \quad \text{and} \\
	\quad
	\inf_{\hat{w} \in H^1(\text{D}_0)\backslash \{0\}} 
	\sup_{\hat{v} \in H^1(\text{D}_0)\backslash \{0\}} 
	\frac{\snorm{\hat{\mathsf{a}}(\hat{v},\hat{w};\z)}}{\|u\|_X \|v\|_X}
	&
	>
	\widetilde{\alpha}(\widetilde{\varepsilon}),
\end{aligned}
\end{equation}
where for each $\boldsymbol{z} \in \mathcal{O}_{\boldsymbol\rho}$
by $\hat{\mathsf{a}}(\cdot,\cdot;\z)$ we refer to the extension of 
$\hat{\mathsf{a}}(\cdot,\cdot;\y)$ to complex-valued parametric input. 
The existence of this extension is guaranteed by the existence of equivalent extension
for the maps defined in \eqref{eq:bpes_jacobian}, \eqref{eq:bpes_normal},
and \eqref{eq:bpes_surface_jacobian}. Hence, recalling \cite[Theorem 4.1]{CCS15}
we may conclude that the map 
\begin{equation}
	\text{U}
	\ni
	\y
	\mapsto
	\hat{u}(\y)
	\in
	H^1(\text{D}_0)
\end{equation}
is $(\boldsymbol{b},p,\varepsilon)$-holomorphic 
with the same $\boldsymbol{b} \in \ell^p(\mathbb{N})$ and $p\in (0,1)$
and for some $\epsilon>0$,
where, for each $\y \in \text{U}$, $\hat{u}(\y)$ is the solution
to Problem \ref{eq: Helmholtz nominal problem}.
A similar argument holds true when we consider the discrete 
parameter-to-solution map. However, at it is customary 
for problems satisfying a Garding-type inequality, there exists
a $h_0>0$ such that for any $h<h_0$ the map
\begin{equation}
	\text{U}
	\ni
	\y
	\mapsto
	\hat{u}_h(\y)
	\in
	V_h
\end{equation}
is $(\boldsymbol{b},p,\varepsilon)$-holomorphic for some 
$\varepsilon>0$ independent of the discretization $h>0$.

\end{proof}

\newpage
\section{Additional Figures}
\begin{figure}[ht]
    \begin{subfigure}{\textwidth}
    \centering
    \includegraphics[scale=0.2]{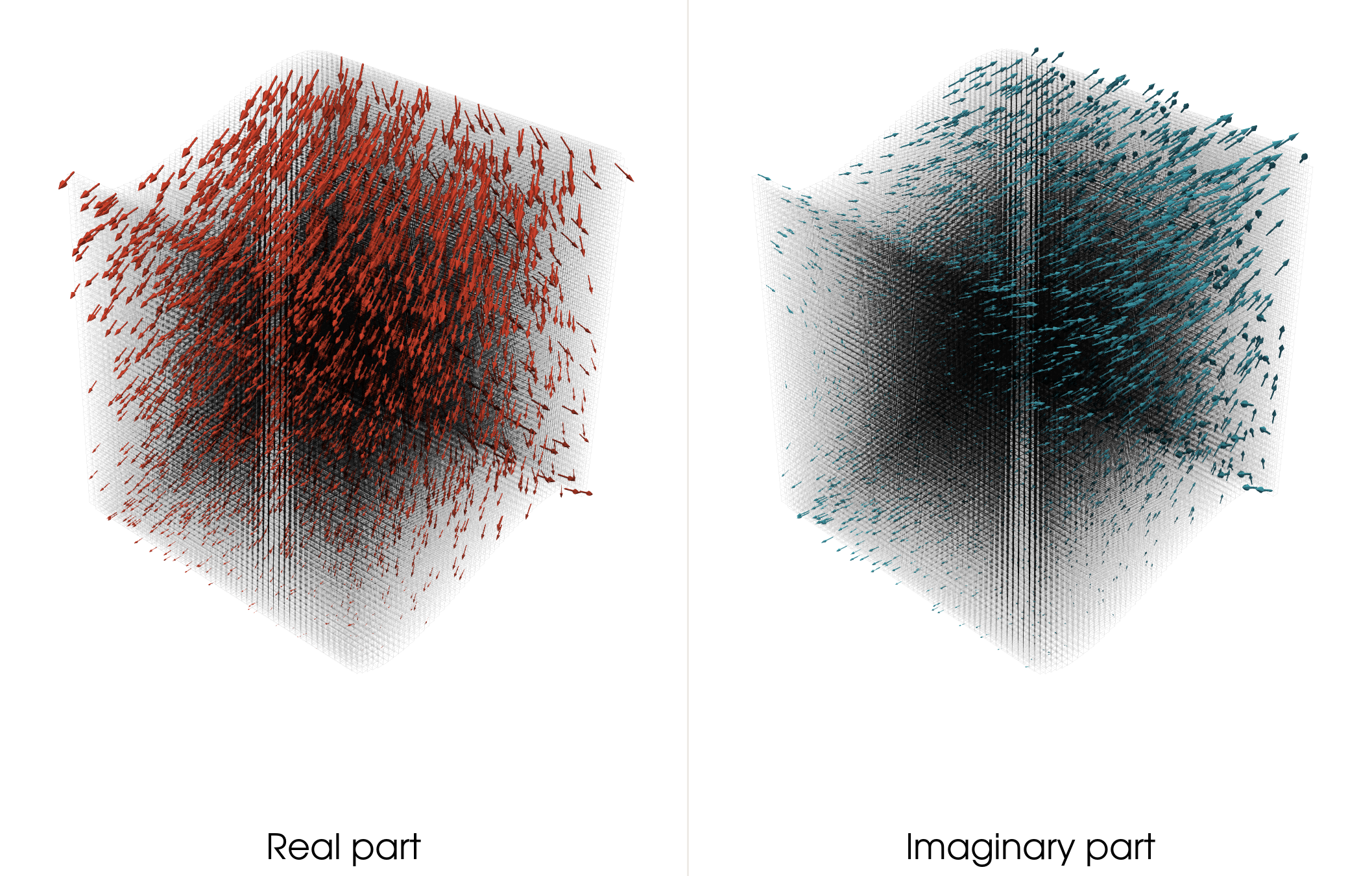}
    \end{subfigure}
    \begin{subfigure}{\textwidth}
    \centering
    \includegraphics[scale=0.2]{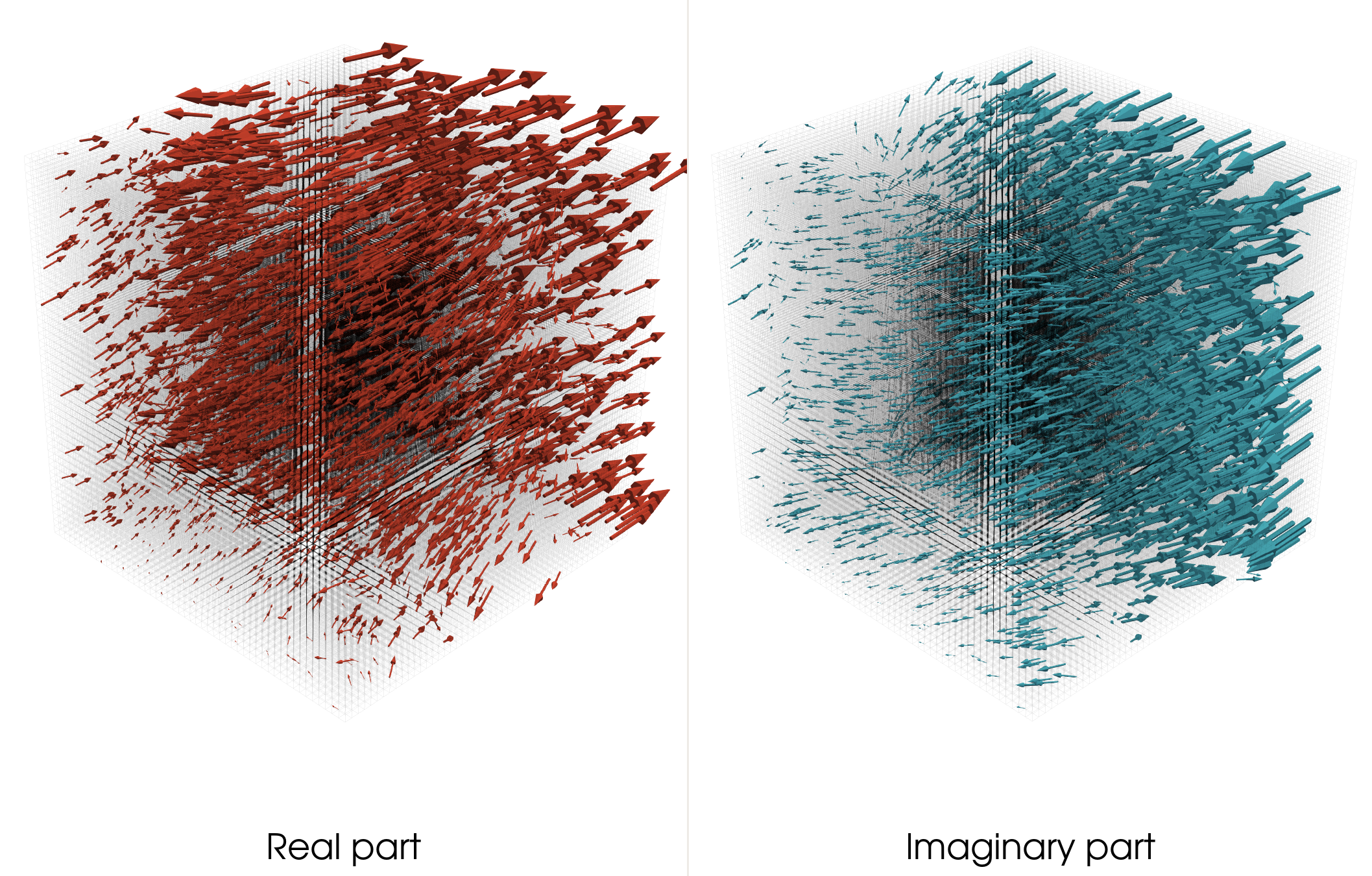}
    \end{subfigure}
    \caption{Glyph plot of the solution field (A) and the first POD mode (B) of the Maxwell problem with algebraic decay ($r=1.0$). The length of the glyphs is given by the real- and imaginary parts, respectively, where the values have been rescaled to 20\% in (B) for better visibility.}
    \label{fig:maxwell 3d visualizations}
\end{figure}

\clearpage
\bibliography{ref}{}
\bibliographystyle{siam}

\end{document}